%
%
%
%
\documentclass[10pt]{amsart}
\linespread{1}
\usepackage{amsfonts,amssymb,amsmath, forest}
\usepackage{amssymb, amsmath}
\usepackage{float,epsfig}
\usepackage{xcolor}
\usepackage{ mathrsfs }
\usepackage{graphicx}
\usepackage{hyperref}
\usepackage{mathabx}
\usepackage[left=2.5cm,top=1 cm,right=2.5cm,nofoot]{geometry}

\usepackage{cleveref}
\usepackage[mathlines]{lineno}

\newtheorem{thm}{Theorem}[section]

\newtheorem{deff}[thm]{Definition}

\newtheorem{prop}[thm]{Proposition}

\newtheorem{remark}{Remark}

\newcommand{\mc}{\mathcal}

\renewcommand{\ss}{\subseteq}
\newcommand{\mf}{\mathfrak}
\newcommand{\ra}{\rightarrow}
\newcommand{\msc}{\mathscr}
\newcommand{\ol}{\overline}

\newcommand{\mfrak}{\mathfrak}
\newcommand{\lan}{\langle}
\newcommand{\ran}{\rangle}

\def \l2x{L^2(X;\mc H)}

\def \Pf{{\mbox{{\bf Pf}}}}
\def \HS{{\mc B_2}}
\def \Ell{{\ell^2(\mathbb Z^r,\mc B_2(L^2(\mathbb R^d)))}}
\DeclareMathOperator*{\Span}{span}

\newcommand{\blue}{\textcolor{blue}}

\theoremstyle{definition}

\theoremstyle{remark}

\numberwithin{equation}{section}



\usepackage{scalerel,stackengine}
\stackMath
\newcommand\reallywidehat[1]{%
	\savestack{\tmpbox}{\stretchto{%
			\scaleto{%
				\scalerel*[\widthof{\ensuremath{#1}}]{\kern.1pt\mathchar"0362\kern.1pt}%
				{\rule{0ex}{\textheight}}
			}{\textheight}%
		}{2.4ex}}%
	\stackon[-6.9pt]{#1}{\tmpbox}%
}
\parskip 1ex

\begin{document}

\title[Reproducing FORMULAS  ON NILPOTENT LIE GROUPS]{REPRODUCING FORMULAS ASSOCIATED TO\\
	translation GENERATED SYSTEMS ON NILPOTENT LIE GROUPS}
\author{Sudipta Sarkar}
\address{Department of Mathematics,
	Indian Institute of Technology Indore,
	Simrol, Khandwa Road,
	Indore-453 552}
\email{sudipta.math7@gmail.com, nirajshukla@iiti.ac.in}

\thanks{Research of S. Sarkar and N. K. Shukla was supported by   research grant from CSIR, New Delhi [09/1022(0037)/2017-EMR-I] and NBHM-DAE [02011/19/2018-NBHM(R.P.)/R\&D II/14723], respectively.}

\author{Niraj K. Shukla}


\subjclass[2000]{22E25, 22E30, 43A60, 43A80}

\keywords{Nilpotent Lie group and   Heisenberg group,  Frame and Riesz basis, Reproducing formula, Biorthogonal system and Oblique dual,     Translation invariant space}

\begin{abstract}   Let $G$ be a connected, simply connected,  nilpotent Lie group whose irreducible unitary representations are square-integrable modulo the center. We obtain characterization results for reproducing formulas associated with the left translation generated systems in $ L^2(G)$. Unlike the previous study of discrete frames on the nilpotent Lie groups, the current research occurs within the set up of continuous frames, which means the resulting reproducing formulas are given in terms of integral representations instead of discrete sums. As a consequence of our results for the Heisenberg group, a reproducing formula associated with the orthonormal Gabor systems of $L^2(\mathbb R^d)$ is obtained. 
\end{abstract}


\maketitle

%
%

\section{{\bf Introduction }}\label{intro}
 One of the most attractive areas of research in Harmonic analysis is to reproduce a function from a given set of functions via a formula known as \textit{reproducing formula}.   Such formulas are studied  in various platforms like Gabor, wavelet, and shift generated systems  in the Euclidean and locally compact abelian  (LCA) group set up due to its wide use in various areas: time-frequency analysis and mathematical physics,    quantum mechanics,   quantum optics, etc., (see,  \cite{bownik2007SMI, christensen2004oblique, gabardo2009properties, gabardoDual2004uniqueness,heil2009duals,hemmat2007uniqueness}).
 In general, the aim is to find $\{\psi_i\}_{i\in I}$,  for the given system $\{\varphi_i\}_{i\in I}$, where $I$ is countable such that the below reproducing formula 
$$
\quad f=\sum_{i\in I, k\in \mathbb Z^n} \langle f, \psi_{i}(\cdot-k)\rangle \varphi_i(\cdot-k), \quad  \mbox{for all} \  f \in\ol{ \mbox{span}}\{\varphi_i(\cdot-k) \}_{i, k}
$$
holds  true  provided both the systems $\{\varphi_i(\cdot-k)\}_{i, k}$ and $\{\psi_i(\cdot-k)\}_{i, k}$ are Bessel. The system $\{\psi_i(\cdot-k)\}_{i, k}$  is known as \textit{alternate dual} to   $\{\varphi_i(\cdot-k)\}_{i, k}$,  where $\{\varphi_i(\cdot-k)\}_{i, k}$ is a frame sequence satisfying the above formula.
 It can be noted that the above stable decomposition of $f$ allows the flexibility of choosing different type of duals   of a   frame $\{\varphi_i(\cdot-k) \}_{i, k}$. When the frame sequence $\{\varphi_i(\cdot-k) \}_{i, k}$ becomes Riesz basis, the choice of $\psi_i$ is unique and the system $\{\psi_i(\cdot-k)\}_{i, k}$  is   \textit{biorthogonal dual} to   $\{\varphi_i(\cdot-k)\}_{i, k}$. In general\blue{,}    the   researchers  consider  $\ol{ \mbox{span}}\{\varphi_i(\cdot-k)\}_{i, k}=\ol{ \mbox{span}}\{\psi_i(\cdot-k)\}_{i, k}$  but sometimes  choosing $\psi_i$'s outside the space $\ol{ \mbox{span}}\{\varphi_i(\cdot-k)\}_{i, k}$    provides better localization properties in both the time and the  frequency domains. Such  kind of requirements motivate researchers to define various  duals of a frame, like, oblique dual, type-I and  type-II duals, etc. We refer   \cite{  gabardo2009properties, han2000frames, heil2009duals, hemmat2007uniqueness} for more details.

Our main goal is to describe the reproducing formulas associated with the left translation generated  continuous frame systems in   $L^2 (G)$.
 In particular, we assume $G$  to be a  connected, simply connected, nilpotent Lie group with Lie  algebra $\mathfrak{g}$ and   $Z$ be  the  center of $G$.  Then $G$ is an \textit{$SI/Z$ group} if almost all of its irreducible representations are square-integrable (SI) modulo the center $Z$.
An irreducible representation $\pi$ of $G$ is called \textit{square integrable modulo the center} if  it satisfies the condition 
\begin{align*}
	\int_{G/Z}|\langle \pi({g}) u,v\rangle|^2d \dot{g}<\infty, \ \mbox{for all}\ u, v.
\end{align*}
The motive of the current study has two folds: First it touches the non-abelian set up of the nilpotent Lie group which is considered to be the high degree of non-abelian structure, and secondly, it captures the non-discrete translation. Consequently, the theory is valid for the $d$-dimensional Heisenberg group $\mathbb H^d$, a 2-step nilpotent Lie group. Indeed,  our work is the continuation of a chain of work of  Currey et al. \cite{currey2014characterization} for $L^2(G)$, and  Barbieri et al. \cite{bar2014bracket} for   $L^2 (\mathbb H^d)$.

 We briefly start by describing left translation generated  systems in $L^2 (G)$ as follows: For a sequence  of functions  $\msc A =\{\varphi_k : k\in  I\}$ in $L^2( G)$ and   a subset $\Lambda$ of $G$,   we   define \textit{$\Lambda$-translation generated ($\Lambda$-TG) system}  $\mc E^{\Lambda} (\msc A)$ and its associated  \textit{$\Lambda$-translation invariant   ($\Lambda$-TI)  space} $\mc S^{\Lambda} (\msc A)$   by the action of $\Lambda$ as follows: 
\begin{align}\label{TIsystem}
\mc E^{\Lambda} (\msc A):=\{L_\lambda \varphi :    \lambda \in  \Lambda,\varphi \in \msc A\} , \ \mbox{and} \ \mc S^{\Lambda}(\msc A) := \ol{\mbox{span}} \ \mc E^{\Lambda}(\msc A),
\end{align}
where for each $\lambda \in G$, the \textit{left translation operator} $L_\lambda$ on $L^2 (G)$   is defined by $L_{\lambda} f (x)= f (\lambda^{-1}x),$ for $x \in G$. A   closed subspace $V$ of $L^2(G)$ is  \textit{$\Lambda$-TI  space}  if $f\in V$ implies $L_\lambda f\in V$ for all $\lambda\in \Lambda$.  We denote $\mc E^{\Lambda} (\msc A)=\mc E^{\Lambda} (\varphi)$ and $\mc S^{\Lambda} (\msc A)=\mc S^{\Lambda} (\varphi)$ for $\msc A=\{\varphi\}$. For     an  integer lattice $\Lambda_0$ in the center of $G$, we   particularly    consider  $\Lambda=\{\lambda_1 \lambda_0: \lambda_i \in \Lambda_i, \ i=0, 1\}$,  where    $\Lambda_1$ is a subset (not necessarily discrete) of   $G$.

Due to the wide use of continuous frames in various areas, like harmonic analysis,  mathematical physics,    quantum mechanics,   quantum optics, etc.,   we discuss the reproducing formulas for $\mc E^{\Lambda}(\msc A)$ associated with the general set $\Lambda_1$ {need not be discrete}. The  current study  provides a compendious   study of duals for a continuous frame $\mc E^{\Lambda}(\msc A)$ of $\mc S^{\Lambda}(\msc A)$.     
In the continuous setup, we provide point-wise characterizations of alternate duals and their subcategories like oblique, type I, and type II duals for the continuous frame $\mc E^\Lambda(\msc A)$ and show that the global property of duals can be transmitted into locally.   Unlike the  Euclidean case  \cite{hemmat2007uniqueness}, it is worthwhile to mention that the  point-wise characterization results  depends  on $\Lambda_1$.  
 We can get results for discrete frames by sampling the continuous frames, which keep wide applications due to their computational simplicity.  Further for the discrete set $\Lambda_1$, we discuss reproducing formula associated to the singly generated systems $\mc E^{\Lambda} (\varphi)$ and $\mc E^{\Lambda} (\psi)$ having biorthogonal property, where $\varphi, \psi \in L^2 (G)$.  At this juncture, we point out that   the technique used in the Euclidean and LCA groups is restrained since   the dual space of the nilpotent Lie groups replaces the frequency domain, and the Plancherel transform of a function is operator-valued.

To illustrate the current work for the Heisenberg group $\mathbb H^d$, we first note that $\mathbb H^d$ is an $SI/Z$ nilpotent Lie group identified with $\mathbb  R^d \times \mathbb R^d \times \mathbb R$.  The \textit{$d$-dimensional Heisenberg group},  denoted by $\mathbb  H^d$, is an example of $SI/Z$ group. The group $\mathbb H^d$ can be   identified with $\mathbb R^d\times \mathbb R^d\times \mathbb R$ 
under the group operation 
$(x,y,w).(x',y',w')=(x+x',y+y',w+w'+x\cdot y)$, $x, x', y, y' \in \mathbb R^d$, $w, w' \in \mathbb R$,  where  ``$\cdot$'' stands for standard dot product.  The results which has been derived  so far can be converted for $L^2 (\mathbb H^d)$ using the fiberization map associated with the   Schr\"odinger representations on $L^2 (\mathbb R^d)$ in a natural way.  In this case  we can choose   $\Lambda_1, \Lambda_0$ of the form $\Lambda_1=
	\Gamma_1\times \Gamma_2\times \{0\}$ and $\Lambda_0=\{(0,0)\times m \mathbb Z\}$, where $\Gamma_1$, $\Gamma_2$ are additive subgroups of $\mathbb R^d$ and $m \in \mathbb N$.
 As a consequence of our results for the Heisenberg group, a reproducing formula associated with the orthonormal Gabor systems of $L^2(\mathbb R^d)$ is obtained.


%

The paper is organized as follows: In Section \ref{S:Pre}  we provide a brief discussion about the  Plancherel transform for the $SI/Z$ nilpotent Lie group. Section \ref{S:non-discrete} is devoted to constructing reproducing formulas associated with the continuous frames generated by the non-discrete translation of multiple functions using the range function. Employing the Plancherel transform followed by periodization, we discuss the reproducing formula for the single generated discrete  system  $\mc E^{\Lambda} (\varphi)$ and $\mc E^{\Lambda} (\psi)$  having biorthogonal property in Section \ref{S:Rieszbasis}.

\section{Plancherel transform for SI/Z nilpotent Lie group}\label{S:Pre}
Let $G$ be a connected, simply connected, nilpotent Lie group with Lie  algebra $\mathfrak{g}$. We identify 
$G$ with $\mfrak g\cong \mathbb R^n$ due to the analytic diffeomorphism of the exponential map $\exp:\mathfrak g\rightarrow G$, where $n=\mbox{dim}~{\mfrak g}$. To choose a basis for the Lie  algebra $\mathfrak{g}$, we consider the Jordon-H\"older series $(0)\ss \mfrak g_1\ss \mfrak g_2\ss\dots \ss \mfrak g_n=\mathfrak g$ of ideals of $\mfrak g$   such that dim~$\mfrak g_j=j$ for $j=0,1,\dots, n$ satisfying  ad$(X)\mfrak g_j\ss \mfrak g_{j-1}$  for $j=1,\dots, n$, for all $X\in \mfrak g$, where for $X, Y\in \mfrak g$, ad$(X)(Y)=[X, Y]$, the Lie bracket of $X$ and $Y$.  Now   we pick $X_j\in \mathfrak g_j\backslash \mathfrak g_{j-1}$  for each $j=1,2,\dots, n$, such that the collection $\{X_1, X_2, \dots, X_n\}$ is a Jordan-H\"older basis.  Then  the map $\mathbb R^n\longrightarrow \mathfrak g\longrightarrow G$ defined by $(x_1,x_2,\dots,x_n)\mapsto \sum_{j=1}^n x_jX_j\mapsto \exp(\sum_{j=1}^n x_jX_j)$ is a diffeomorphism, and  hence the Lebesgue measure on $\mathbb R^n$ can be realized as a Haar measure on $G$ \cite{corwin2004representations}.

Note that the center $\mathfrak z$ of the Lie algebra $\mathfrak{g}$ is non-trivial, and it maps to the center $Z:=\exp{\mathfrak z}$ of $G$.
The Lie group $G$ acts on $\mathfrak g$  and $\mathfrak g^*$ by the \textit{adjoint action} $\exp(Ad(x)X):=x\exp(X)x^{-1}$ and   \textit{co-adjoint action} $(Ad^*(x)\ell)(X) =\ell(Ad(x^{-1})X)$, respectively,  for  $x\in G, X\in \mathfrak g,$ and $ \ell\in \mathfrak g^*$.     The  $\mathfrak g^*$ denotes  the  vector space of all real-valued linear functionals on $\mathfrak g$.  For  $\ell\in \mathfrak g^*$ the   \textit{stabilizer} $R_{\ell} =\{x\in G:(Ad^*x)\ell=\ell\}$ is a Lie group   with the associated Lie algebra $r_{\ell}:=\{X\in \mathfrak{g}: \ell[Y, X]=0, \mbox{for all} \ Y \in \mathfrak g\}$. 


Our aim is to discuss   Kirilov Theory \cite{corwin2004representations}   to define the Plancherel transform for $SI/Z$ group. Given any $\ell\in \mathfrak{g^*}$ there exists a subalgebra~ $\mfrak{h}_{\ell}$ (known as \textit{polarizing} or \textit{maximal subordinate  subalgebra}) of $\mathfrak{g}$ which is maximal with respect to the property
$\ell[\mathfrak{h}_{\ell},\mfrak{h}_{\ell}]=0$. Then the map  $\mathcal{X}_{\ell}:\exp(\mfrak{h}_{\ell})\rightarrow \mathbb T$ defined  by
$\mathcal X_{\ell}(\exp X)=e^{2\pi i \ell(X)}$,  $X\in \mfrak{h}_{\ell}$ is a character on $\exp(\mfrak{h}_{\ell})$, and hence the representations induced from $\mathcal X_{l}$, $\pi_{\ell} : =\mbox{ind}_{\exp{\mathfrak h_{\ell}}}^G$$\mathcal{X}_{\ell}$, have the following properties:
\begin{itemize}
	\item[(i)] $\pi_{\ell}$ is an irreducible unitary representation of $G$.
	\item[(ii)]  Suppose $\mfrak{h'}_{\ell}$ is another subalgebra which is maximal with respect to the property $\ell[{\mathfrak {h}}'_{\ell}, {\mfrak{h}}'_{\ell}]=0$, then $\mbox{ind}_{\exp{\mfrak{h}_{\ell}}}^G\mathcal X_{\ell}\cong\mbox{ind}_{\exp(\mfrak h'_{\ell})}^G\mathcal X_{\ell'}$. 
	\item[(iii)] $\pi_{\ell_1}\cong \pi_{\ell_2}$ if and only if $\ell_1$ and $\ell_2$ lie in the same co-adjoint orbit.
	\item[(iv)] Suppose $\pi$ is a irreducible unitary representation of $G$, then there exists $\ell\in \mfrak{g^*}$ such that $\pi\cong \pi_{\ell}$.
\end{itemize}
Therefore there exists  a   bijection 
$\mf t^*:\mfrak g^*/Ad^*(G)\rightarrow \widehat{G}$ which is also  a Borel isomorphism, where $\widehat{G}$ is the collection of all irreducible unitary representations of $G$.

For an irreducible representation $\pi\in \widehat G$, let $O_{\pi}$ denote as coadjoint orbit corresponding to the equivalence class of $\pi$. Then the orbital characterization for the SI/Z representation is: 
$\pi$ is square integrable modulo the center if and only if  for $\ell\in O_\pi$, $r_{\ell}=\mathfrak z$ and $O_\pi=\ell+\mathfrak z^{\perp}$. If SI/Z $\neq \phi$,  then SI/Z=$\widehat{G}_{max}$, where the Borel subset  $\widehat {G}_{max}\ss \widehat G$ corresponds  to coadjoint orbits of maximal dimension which is co-null for Plancherel measure class. Hence when $G$ is an SI/Z group,  $\widehat G_{max}$ is parameterized by a subset of $\mathfrak z^*$. If  $\pi \in \widehat G_{\max}$, then  dim $O_\pi=n-\mbox{dim} \  \mathfrak z$, since $O_\pi$ is symplectic manifold, it is of even dimension, say,  $\dim O_\pi=2d$.  By Schurs' Lemma the restriction of $\pi$ to $Z$ is a  character and hence  it is a unique element $\sigma=\sigma_\pi \in \mathfrak z^*$ (say) and $\pi(z)=e^{2\pi i\langle \sigma, log z\rangle}I$, where $I $ is the identity operator. It shows that $O_\pi=\{l\in \mathfrak g^*:l|_{\mathfrak z}=\sigma\}$ and $\pi\mapsto \sigma_\pi$ is injective.

Let $G$ be  an SI/Z group    and $\mc W=\{\sigma \in \mathfrak{z^*}:{\bf Pf}(\sigma)\neq 0\}$
be a cross section for the coadjoint orbits of maximal dimension, where  the \textit{Pfaffian determinant}   ${\bf Pf}: \mathfrak z^*\longrightarrow \mathbb R$ is given by  $\ell \mapsto \sqrt{\big| \det(\ell[X_i, X_j] )_{i,j=r\dots n}\big|}$.  Then  for a fixed $\sigma \in \mc W$,  $p(\sigma)=\sum_{j=1}^d \mathfrak g_j(\sigma\big|_{\mathfrak g_j})$ is a  maximal subordinate subalgebra for $\sigma$ and the corresponding induced representation $\pi_\sigma$ is realized naturally in $L^2 (\mathbb R^d)$, where $n=r+2d$ for some $d$. For each $\varphi \in L^1(G)\cap L^2(G)$, the Fourier transform of $\varphi$  given  by  
$$\widehat{\varphi}(\sigma)=\int_{G} \varphi(x)\pi_{\sigma}(x)dx, \quad \sigma \in \mc W$$
defines a trace-class operator on $L^2(\mathbb R^d)$, with the inner product $\langle A, B\rangle_{ \HS}=tr(B^*A)$. This space is denoted by $\HS(L^2(\mathbb R^d))$.  When $d\sigma$ is suitable normalized,
$$ \|\varphi\|^2 =\int_{\mc W}\|\widehat{\varphi}(\sigma)\|^2_{\HS(L^2(\mathbb R^d))}|{\bf Pf}(\sigma)| d(\sigma).$$
The Fourier transform can be extended unitarily as $\mc F$ - the \textit{Plancherel transform},
\begin{align*}
	\mc F:L^2(G) & \rightarrow L^2(\mathfrak z^*,\HS(L^2(\mathbb R^d)), |{\bf Pf}(\sigma)|d\sigma)) ,   \quad  \mc Ff=\widehat f.
\end{align*}
Note that the Plancherel transform  $\mc F$ satisfies the relation 
$$\mc F (L_{\lambda} f )(\sigma)=\pi_{\sigma}(\lambda) \mc F f (\sigma), \  \mbox{for} \ \lambda \in G,  \ a.e. \  \sigma \in \mathfrak z^*,  \ \mbox{and} \  f \in L^2(G),$$
where   the left translation operator $L_\lambda$ on $L^2 (G)$  is given by $L_{\lambda} f (x)= f (\lambda^{-1}x)$.
 
  Applying the periodization on $\mc F$ there is a unitary map  (see, \cite{currey2014characterization,sudipta2022extrainvariant}), $\mathscr F:L^2(G)\rightarrow L^2( \mathbb T^r;\ell^2(\mathbb Z^r, \HS(L^2(\mathbb R^d))))$ 
	\begin{equation}\label{T:fiberization} 
		\mathscr Ff(\alpha)(m)=\mc Ff(\alpha+m)|\Pf(\alpha+m)|^{1/2}, \ \mbox{for all } f\in L^2(G), \ m \in \mathbb Z^r, \ \mbox{a.e.}\  \alpha \in \mathbb T^r.
	\end{equation}
  Moreover,  $\msc F$ satisfies the intertwining property of left translation by the action of  $\Lambda$   with the representation $\tilde \pi_{}$ as follows:
	\begin{equation}\label{eq:intertwining}
		\msc F(L_{\lambda}f)(\alpha)=e^{2\pi i\langle \alpha, \lambda_0\rangle}\tilde{\pi}(\lambda_1)\msc Ff(\alpha), \quad  \mbox{a.e.}\  \alpha \in \mathbb T^r,
	\end{equation}
	where    for $\lambda_0 \in \Lambda_0$ and $\lambda_1 \in \Lambda_1$,  we write $\lambda=\lambda_1\lambda_0$, and for $g \in G$ and  $h\in  L^2( \mathbb T^r;\ell^2(\mathbb Z^r, \HS(L^2(\mathbb R^d))))$,      $\tilde{\pi}(g) h(\alpha)=\tilde{\pi}_{\alpha}(g)h(\alpha)$, a.e. $\alpha \in \mathbb T^r$. The representation  $\tilde \pi_{\alpha}(g)$ is defined on $\Ell$ by $\tilde{\pi}_{\alpha}(g) z(m)=\pi_{\alpha+m}(g)\degree z(m)$ for $m \in \mathbb Z^r$, where $(z(m)) \in \Ell$, $\degree$ denotes the composition of operators in $\HS(L^2(\mathbb R^d))$ and $\pi_{\alpha+m}(g)$ is the Hilbert-Schmidt operator defined on $L^2(\mathbb R^d)$.

\section{Reproducing formulas associated to continuous frames} \label{S:non-discrete}
Through out the section we assume  $G$ to be a connected, simply connected, nilpotent Lie group with Lie  algebra $\mathfrak{g}$. The  Haar measure on $G$ can be realised as a Lebesgue measure on $\mathbb R^d$. Further assume an arbitrary measurable subset $$\Lambda_1\ss\exp\mathbb RX_{r+1}\dots \exp \mathbb RX_{n} \ \mbox{(not necessarily discrete)} $$ and the  integer lattice $\Lambda_0=\exp \mathbb ZX_1\dots\exp \mathbb ZX_r $ in $G$. For the   
 countable collection  of functions
$\msc A=\{\varphi_k: k\in I\}$ and $\msc A'=\{\psi_k: k\in I\}$ in $L^2(G)$, we recall the  $\Lambda$-translation generated systems  $$\mc E^\Lambda(\msc A)=\{L_\lambda\varphi_k: \lambda\in \Lambda, k\in I \}\  \mbox{and} \  \mc E^\Lambda(\msc A')=\{L_\lambda\psi_k: \lambda\in \Lambda, k\in I \},$$ where $\Lambda=\Lambda_1\Lambda_0=\{\lambda_1\lambda_0:\lambda_1\in \Lambda_1, \lambda_0\in \Lambda_0\}$. The set $\Lambda=\Lambda_1\Lambda_0$ is measurable.

In the present section our aim is to develop theory for the reproducing formulas in the continuous setup having multiple generators.  
Two Bessel families $\mc E^\Lambda(\msc A)$ and $\mc E^\Lambda(\msc A')$ are said to be \textit{dual frames} if  
{\small$$
	<f, g>=\sum_{k \in I}\int_\Lambda <f, L_\lambda\psi_k><L_\lambda\varphi_k, g> d\lambda, \ \mbox{for all} \ f, g \in L^2 (G). 
	$$}
In this case the reproducing formula (in weak sense)  is 
{\small\begin{equation}\label{rformula}
		f =\sum_{k \in I}\int_\Lambda <f, L_\lambda\psi_k>L_\lambda\varphi_k d\lambda, \ \mbox{for all} \ f  \in L^2 (G),
\end{equation}}
and  both the systems $\mc E^\Lambda(\msc A)$ and $\mc E^\Lambda(\msc A')$ will become a continuous frame for $L^2(G)$ in view of Cauchy-Schwarz inequality.  By a \textit{Bessel family} $\mc E^\Lambda(\msc A)$, we mean  there is a  $B>0$ such that 
 $$
	\sum_{k \in I} \int_{\Lambda} |<f, L_{\lambda} \varphi_k>|^2  d \lambda \leq B \|f\|^2, \ \mbox{for all} \ f  \in L^2 (G).
	$$ 
In addition it is  a \textit{continuous frame} for  $L^2 (G)$  if  there exists a $0<A\leq B<\infty$ such that $A \|f\|^2$ is a lower bound of the above expression.
If both the lower and upper inequality  holds for all $f\in \mc S^\Lambda(\msc A)$, then $\mc E^\Lambda(\msc A)$ is  a frame for $\mc S^\Lambda(\msc A)$.
In the  case of  countable set $\Lambda$ and counting measure $\mu_\Lambda$, the expression (\ref{rformula}) becomes \begin{equation*}
		f =\sum_{k \in I}\sum_{\lambda\in \Lambda} <f, L_\lambda\psi_k>L_\lambda\varphi_k, \ \mbox{for all} \ f  \in L^2 (G).
\end{equation*} 
The reproducing formula (\ref{rformula}) might hold under much weaker restrictions   on $\mc E^\Lambda(\msc A)$ and $\mc E^\Lambda(\msc A')$.  Assume $\mc E^\Lambda(\msc A)$ to be a continuous frame for   $\mc S^\Lambda(\msc A)$  and  $\mc E^\Lambda(\msc A')$  is Bessel so that the   reproducing formula (\ref{rformula})  holds for all $f \in \mc S^\Lambda(\msc A)$ then
the system	$\mc E^\Lambda(\msc A')$ is said to be  an \textit{alternate dual} of $\mc E^\Lambda(\msc A)$. In this case $\mc E^\Lambda(\msc A')$ may not be a continuous frame for $\mc S^\Lambda(\msc A')$ and $\mc E^\Lambda(\msc A)$ need not be an alternate dual of $\mc E^\Lambda(\msc A')$. If it is so,  then the   alternate dual $\mc E^\Lambda(\msc A')$ is  an \textit{oblique dual} of $\mc E^\Lambda(\msc A)$.  To get more flexibility for choosing of alternate duals, we further categorize it into two more categories based on the range of analysis operator viz., type-I and type-II duals.  $\mc E^{\Lambda}(\msc A')$ is a \textit{type-I (type-II) dual} of $\mc E^{\Lambda}(\msc A)$ if it is an alternate dual of $\mc E^{\Lambda}(\msc A)$  and 
$\mbox{range} (T_{{\mc E^{\Lambda}(\msc A')}}^*)\subset \mbox{range} (T_{\mc E^{\Lambda}(\msc A)}^*)$ ($\mbox{range} (T_{\mathcal E^{\Lambda}(\msc A')})\subset \mbox{range} (T_{\mc E^{\Lambda}(\msc A)})$), where  the analysis $T_{\mc E^\Lambda(\msc A)}$ and   synthesis   $T^*_{\mc E^\Lambda(\msc A)}$  operators associated to $\mc E^\Lambda(\msc A)$ are  defined as follows:   
$$	T_{\mc E^\Lambda(\msc A)}  f (k, \lambda)=\lan f,  L_\lambda \varphi_k\ran, \ \mbox{and} \  \lan T_{\mc E^\Lambda(\msc A)}^* g, f\ran= \sum_{k \in I}\int_{G} g(\lambda, k)\lan L_\lambda \varphi_k, f \ran  \  d{\lambda},$$ for all $k\in I, \lambda \in \Lambda$,  
$g \in L^2 (I \times \Lambda),  \, f \in L^2 (G),$  respectively. The reproducing formula  (\ref{rformula})    for all $f \in \mc S^\Lambda(\msc A)$ can be expressed   $$T_{\mc E^\Lambda(\msc A)}^*T_{\mc E^\Lambda(\msc A')}\big|_{\mc S^\Lambda(\msc A)}  =I_{\mc S^\Lambda(\msc A)}$$ in the form of operators, where $I_{\mc S^\Lambda(\msc A)}$  is the identity operator restricted on the $\mc S^\Lambda(\msc A)$.  Such duals   have been investigated for the discrete frames   by many authors including  \cite{heil2009duals,hemmat2007uniqueness}  in the  Euclidean setup.

	%
	%
	%
	%

We state our main results of this section which characterize alternate (oblique) duals and type-I (type-II) duals in the nilpotent Lie group setup. Our characterization is based on the range function technique for SI/Z Lie group associated with the representation $\tilde{\pi}_\alpha$.  A  range function is a  measurable map from $\mathbb T^r$ to the collection of closed subspaces of $ \ell^2(\mathbb Z^r, \HS(L^2(\mathbb R^d)))$. 

\begin{thm}\label{T:dualnondiscrete}  
	For a.e. $\alpha \in \mathbb T^r$, we consider the range function   
	\begin{align}\label{range}
		J_{\msc A}(\alpha)={\overline{\Span}} \{\tilde{\pi}_{\alpha}(\lambda_1)\msc F\varphi(\alpha):\varphi\in \msc A, \lambda_1\in \Lambda_1\} \subseteq \Ell ,
	\end{align}
	associated with the representation $\tilde{\pi}_{\alpha}$.  Then   $\mc E^\Lambda(\msc A')$ is an alternate (oblique) dual of  $\mc E^\Lambda(\msc A)$   if and only if 
	for a.e. $\alpha\in \mathbb  T^r$, the system $\{\tilde{\pi}_{\alpha}(\lambda_1)\msc F\psi(\alpha):\psi \in \msc A', \lambda\in \Lambda_1\}$ is an alternate (oblique) dual of the continuous  frame $ \{\tilde{\pi}_{\alpha}(\lambda_1)\msc F\varphi(\alpha):\varphi\in \msc A, \lambda_1\in \Lambda_1\}$ for   $J_{\msc A}(\alpha)$, i.e., 	for all $h\in J_{\msc A}(\alpha)$, 
	$$h=\sum_{k \in I}\int_{ \Lambda_1}\langle h, \tilde{\pi}_{\alpha}(\lambda_1)\msc F\psi_k(\alpha)\rangle   \tilde{\pi}_{\alpha}(\lambda_1)\msc F\varphi_k(\alpha) d\lambda_1.$$ 
\end{thm}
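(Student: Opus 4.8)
The plan is to transfer the reproducing formula from $L^2(G)$ to the fiber spaces via the periodized Plancherel transform $\msc F$, exploiting the intertwining relation \eqref{eq:intertwining}. The key structural fact is that $\msc F$ is unitary onto $L^2(\mathbb T^r;\ell^2(\mathbb Z^r,\HS(L^2(\mathbb R^d))))$, carrying $\Lambda$-translations to the operators $h(\alpha)\mapsto e^{2\pi i\langle\alpha,\lambda_0\rangle}\tilde\pi(\lambda_1)h(\alpha)$. Since $\Lambda_0=\exp\mathbb ZX_1\cdots\exp\mathbb ZX_r$ is the integer lattice dual to $\mathbb T^r$, summing (or integrating) the scalars $e^{2\pi i\langle\alpha,\lambda_0\rangle}$ against the $\mathbb T^r$-variable is exactly a Fourier-series/Plancherel computation, which should collapse the $\lambda_0$-integral and leave a fiberwise identity in the $\lambda_1$-variable. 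So the first move is to write out both sides of the duality relation $\langle f,g\rangle=\sum_k\int_\Lambda\langle f,L_\lambda\psi_k\rangle\langle L_\lambda\varphi_k,g\rangle\,d\lambda$ using $f=\msc F^{-1}F$, $g=\msc F^{-1}G$, apply \eqref{eq:intertwining}, and use Fubini (on $\Lambda=\Lambda_1\Lambda_0$, with the product measure) together with the orthogonality of the characters $\{e^{2\pi i\langle\cdot,\lambda_0\rangle}\}_{\lambda_0\in\Lambda_0}$ on $\mathbb T^r$ to reduce the global relation to
\[
\int_{\mathbb T^r}\langle F(\alpha),G(\alpha)\rangle\,d\alpha
=\int_{\mathbb T^r}\sum_{k\in I}\int_{\Lambda_1}\langle F(\alpha),\tilde\pi_\alpha(\lambda_1)\msc F\psi_k(\alpha)\rangle\langle\tilde\pi_\alpha(\lambda_1)\msc F\varphi_k(\alpha),G(\alpha)\rangle\,d\lambda_1\,d\alpha.
\]

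Next I would argue that an integral identity of this form, valid for all $F,G\in L^2(\mathbb T^r;\ell^2(\cdots))$, is equivalent to its fiberwise version for a.e.\ $\alpha$. One direction is immediate; for the other, one uses a measurable-selection / localization argument: plug in $F=\chi_E\cdot F_0$, $G=\chi_E\cdot G_0$ for arbitrary measurable $E\subseteq\mathbb T^r$ and a countable dense family of fiber vectors (this is where the range-function machinery and the standard ``global $\Rightarrow$ local'' decomposition, already invoked in the paper's discussion, enter). The Lebesgue differentiation theorem then gives, for a.e.\ $\alpha$,
\[
\langle h_1,h_2\rangle=\sum_{k\in I}\int_{\Lambda_1}\langle h_1,\tilde\pi_\alpha(\lambda_1)\msc F\psi_k(\alpha)\rangle\langle\tilde\pi_\alpha(\lambda_1)\msc F\varphi_k(\alpha),h_2\rangle\,d\lambda_1
\]
for all $h_1,h_2$ in the dense family, hence for all $h_1,h_2\in\ell^2(\mathbb Z^r,\HS(L^2(\mathbb R^d)))$. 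To see that this fiberwise relation is the \emph{same} as ``dual for $J_{\msc A}(\alpha)$'', note that the image of $\mc S^\Lambda(\msc A)$ under $\msc F$ is the subspace $\{h:h(\alpha)\in J_{\msc A}(\alpha)\ \text{a.e.}\}$ (by the range-function description of $\Lambda$-TI spaces from \cite{currey2014characterization,sudipta2022extrainvariant}), so the synthesis vectors $\tilde\pi_\alpha(\lambda_1)\msc F\varphi_k(\alpha)$ indeed lie in $J_{\msc A}(\alpha)$ and span it; restricting $h_1,h_2$ to $J_{\msc A}(\alpha)$ is then exactly the statement that $\{\tilde\pi_\alpha(\lambda_1)\msc F\psi_k(\alpha)\}$ is an alternate dual there. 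The ``oblique'' refinement follows by the same transfer applied to the additional symmetric requirement (roles of $\msc A$ and $\msc A'$ swapped, with $\mc S^\Lambda(\msc A')$ replaced by $J_{\msc A'}(\alpha)$), plus the observation that the Bessel bound passes to and from the fibers integrably.

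Finally, the Bessel hypotheses need to be handled with care so that all the Fubini/interchange steps above are legitimate: I would first record that $\mc E^\Lambda(\msc A)$ Bessel is equivalent to a uniform (in $\alpha$) fiberwise Bessel bound for $\{\tilde\pi_\alpha(\lambda_1)\msc F\varphi_k(\alpha)\}$ — again a consequence of the unitarity of $\msc F$ and the character orthogonality — and similarly for $\msc A'$; this both justifies the manipulations and guarantees that the fiberwise systems are genuine continuous frames for $J_{\msc A}(\alpha)$ when $\mc E^\Lambda(\msc A)$ is a continuous frame for $\mc S^\Lambda(\msc A)$. \textbf{The main obstacle} I anticipate is the ``global $\Rightarrow$ local'' step: making the measurable-selection argument rigorous in this operator-valued, non-discrete setting — i.e.\ producing a \emph{single} null set outside of which the fiberwise identity holds simultaneously for all vectors — requires a careful choice of countable generating sets for $\ell^2(\mathbb Z^r,\HS(L^2(\mathbb R^d)))$ and for the parameter set $I\times\Lambda_1$, and a Fubini argument on $\mathbb T^r\times\Lambda_1$ that respects the measurability of $\alpha\mapsto\tilde\pi_\alpha(\lambda_1)\msc F\varphi_k(\alpha)$. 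Everything else is bookkeeping with the intertwining relation and Plancherel's theorem on $\mathbb T^r$.
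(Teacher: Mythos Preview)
Your proposal is correct and follows essentially the same route as the paper: the fiberization map together with the intertwining relation \eqref{eq:intertwining} and the orthogonality of the characters $\{e^{2\pi i\langle\cdot,\lambda_0\rangle}\}_{\lambda_0\in\Lambda_0}$ reduce the global duality to the integral identity over $\mathbb T^r$ (the paper isolates this as Proposition~\ref{P: Bessel-charecteriazation}), after which the local-to-global direction is immediate and the global-to-local direction is handled by localizing with characteristic functions against a countable dense subset of the fiber space projected onto $J_{\msc A}(\alpha)$. The only cosmetic difference is that where you invoke Lebesgue differentiation, the paper instead argues by contradiction, splitting the defect function $\rho_{i,j}(\alpha)$ into the four sets where its real/imaginary part has a definite sign and testing against $\msc Ff=\chi_{E_1}P_{J_{\msc A}}(\alpha)x_{j_0}$, $\msc Fg=\chi_{E_1}P_{J_{\msc A}}(\alpha)x_{i_0}$.
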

%

It can be noted further that the below characterizations for type-I and type-II duals behaves similar to the Theorem \ref{T:dualnondiscrete} of  the alternate (oblique) duals while type-I and type-II duals are the particular cases of the alternate (oblique) duals.
\begin{thm}\label{T:type I}
$\mc E^\Lambda(\msc A')$  is a type-I (type-II) dual of	$\mc E^\Lambda(\msc A)$  if and only if  
for a.e. $\alpha\in \mathbb  T^r$,  the system  $\{\tilde{\pi}_{\alpha}(\lambda_1)\msc F\psi(\alpha):\psi\in \msc A', \lambda\in \Lambda_1\}$ is a type-I (type-II) dual of the continuous  frame $\{\tilde{\pi}_{\alpha}(\lambda_1)\msc F\varphi(\alpha):\varphi\in \msc A, \lambda_1\in \Lambda_1\}$ for   $J_{\msc A}(\alpha)$, where   the range function  $J_{\msc A}(\alpha)$ is defined by  (\ref{range}) for a.e. $\alpha \in \mathbb T^r$.  
\end{thm}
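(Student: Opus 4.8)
The plan is to deduce Theorem \ref{T:type I} from Theorem \ref{T:dualnondiscrete} by showing that the two extra containment conditions defining type-I and type-II duals---namely $\mathrm{range}(T^*_{\mc E^\Lambda(\msc A')})\subset \mathrm{range}(T^*_{\mc E^\Lambda(\msc A)})$ and $\mathrm{range}(T_{\mc E^\Lambda(\msc A')})\subset \mathrm{range}(T_{\mc E^\Lambda(\msc A)})$---are themselves ``fiberized'' by the Plancherel-periodization unitary $\msc F$, i.e.\ they hold globally if and only if the corresponding containments hold for the fiber systems $\{\tilde\pi_\alpha(\lambda_1)\msc F\varphi_k(\alpha)\}$ and $\{\tilde\pi_\alpha(\lambda_1)\msc F\psi_k(\alpha)\}$ on $J_{\msc A}(\alpha)$ for a.e.\ $\alpha\in\mathbb T^r$. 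Since by Theorem \ref{T:dualnondiscrete} the alternate-dual (resp.\ oblique-dual) property already transfers fiberwise, the only new work is establishing that range inclusions of the analysis and synthesis operators also transfer fiberwise; then type-I (type-II) is exactly ``alternate dual $+$ range inclusion of $T^*$ (of $T$)'' on both sides, and the equivalence follows by conjunction.

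First I would set up the decomposition of the global operators over the fibers. Using \eqref{T:fiberization} and the intertwining relation \eqref{eq:intertwining}, one checks that under the identification $L^2(G)\cong L^2(\mathbb T^r;\ell^2(\mathbb Z^r,\HS(L^2(\mathbb R^d))))$ and $L^2(I\times\Lambda)\cong \int^\oplus_{\mathbb T^r} L^2(I\times\Lambda_1)\,d\alpha$ (via periodizing the $\Lambda_0$-variable into a Fourier series on $\mathbb T^r$), the analysis operator $T_{\mc E^\Lambda(\msc A)}$ is decomposable: it acts fiberwise as $h(\alpha)\mapsto \big((k,\lambda_1)\mapsto \langle h(\alpha),\tilde\pi_\alpha(\lambda_1)\msc F\varphi_k(\alpha)\rangle\big)$, and likewise $T^*_{\mc E^\Lambda(\msc A)}$ is the direct integral of the fiber synthesis operators $T^*_\alpha$. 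This is essentially the content already used to prove Theorem \ref{T:dualnondiscrete}, so I would invoke it. Then I would use the standard fact that for a decomposable operator $T=\int^\oplus T_\alpha\,d\alpha$ between direct-integral Hilbert spaces, $\overline{\mathrm{range}}\,T=\int^\oplus \overline{\mathrm{range}}\,T_\alpha\,d\alpha$ (with the closed range map $\alpha\mapsto\overline{\mathrm{range}}\,T_\alpha$ measurable), and that for two decomposable operators $S,T$ one has $\mathrm{range}\,S\subset\overline{\mathrm{range}\,T}$ globally iff $\mathrm{range}\,S_\alpha\subset\overline{\mathrm{range}\,T_\alpha}$ for a.e.\ $\alpha$. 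Applying this to the pair $(T^*_{\mc E^\Lambda(\msc A')},T^*_{\mc E^\Lambda(\msc A)})$ yields the fiberization of the type-I condition, and to $(T_{\mc E^\Lambda(\msc A')},T_{\mc E^\Lambda(\msc A)})$ yields the type-II condition. Combining with Theorem \ref{T:dualnondiscrete} gives both equivalences.

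One technical point I would be careful about is the interplay between ``range'' and ``closed range'': the definitions in the paper are stated with genuine (not necessarily closed) ranges, but since $\mc E^\Lambda(\msc A')$ is Bessel and $\mc E^\Lambda(\msc A)$ is a continuous frame for $\mc S^\Lambda(\msc A)$, the relevant synthesis operators have closed range on the appropriate subspaces (and analogously fiberwise, using that $\{\tilde\pi_\alpha(\lambda_1)\msc F\varphi_k(\alpha)\}$ is a frame for $J_{\msc A}(\alpha)$ a.e.), so the closure operations are harmless; I would record this reduction explicitly. A second point is measurability of the fiber ranges and of the decomposition itself---this should follow from the measurability of $\alpha\mapsto J_{\msc A}(\alpha)$ (a range function by construction) together with the measurability already implicit in \eqref{T:fiberization}--\eqref{eq:intertwining}; I expect this to be routine given the machinery set up in Section \ref{S:Pre} and used in Theorem \ref{T:dualnondiscrete}.

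The main obstacle, and the step deserving the most care, is the direct-integral range-inclusion lemma: showing $\mathrm{range}\,S\subset\mathrm{range}\,T \iff \mathrm{range}\,S_\alpha\subset\mathrm{range}\,T_\alpha$ for a.e.\ $\alpha$ in the decomposable setting. The forward direction is easy (restrict to fibers), but the reverse requires a measurable-selection argument to assemble fiberwise preimages into a global element of the right Hilbert space with controlled norm; this is where one must lean on the frame structure (to get bounded pseudo-inverses $T_\alpha^\dagger$ with $\alpha$-measurable, essentially bounded norm) rather than on bare surjectivity. Once this lemma is in place---or cited from the literature on shift-invariant spaces / range functions adapted to the present $SI/Z$ context---the rest of the proof is a short bookkeeping argument stringing together Theorem \ref{T:dualnondiscrete} with the two range-inclusion equivalences.
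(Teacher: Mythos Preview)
Your approach is correct but takes a genuinely different route from the paper. You invoke an abstract direct-integral decomposition of the analysis and synthesis operators and then appeal to a general ``range inclusion fiberizes'' lemma for decomposable operators, handling the closed-range reduction and measurable selection explicitly. The paper instead argues each case by hand: for type-I it observes that (because $\mc E^\Lambda(\msc A)$ is a frame for $\mc S^\Lambda(\msc A)$) the condition $\mathrm{range}\,T^*_{\mc E^\Lambda(\msc A')}\subset \mathrm{range}\,T^*_{\mc E^\Lambda(\msc A)}$ reduces to $L_\lambda\psi_k\in \mc S^\Lambda(\msc A)$ for all generators, and then invokes the range-function characterization of $\Lambda$-TI spaces to get the fiberwise statement $\msc F L_{\lambda_1}\psi_k(\alpha)\in J_{\msc A}(\alpha)$. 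For type-II it rewrites the range inclusion of analysis operators as an orthogonality condition on coefficient sequences, expands the global pairing via the fiberization (producing the $\mathbb Z^r$-periodic functions $p_{\lambda_1}^k(\alpha)=\sum_{\lambda_0}c_{k,\lambda_1,\lambda_0}e^{2\pi i\langle\alpha,\lambda_0\rangle}$), and then uses the auxiliary Proposition~\ref{L:dualtype2} to pass between the integrated and pointwise statements. Your framework is cleaner and more uniform---one lemma handles both cases---but requires the measurable-selection/pseudo-inverse argument you flag; the paper's approach avoids that machinery entirely at the cost of treating type-I and type-II asymmetrically and somewhat ad hoc. One small imprecision in your writeup: it is not that $T^*_{\mc E^\Lambda(\msc A')}$ has closed range (Bessel alone does not guarantee this), but rather that $\mathrm{range}\,T^*_{\mc E^\Lambda(\msc A)}=\mc S^\Lambda(\msc A)$ is closed (frame property), so the inclusion is unaffected by closing the left-hand side; the same remark applies to the analysis operators in the type-II case.
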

 \begin{remark}
			For the Heisenberg group $\mathbb H^d$ identified with $\mathbb R^d\times \mathbb R^d\times\mathbb R$ consider   $$\Lambda_1=\mathbb R^d\times \mathbb R^d\times \{0\} \  \mbox{and} \  \Lambda_0=\{(0,0)\}\times\mathbb Z,$$ then the set $\Lambda=\Lambda_1\Lambda_0$ can be identified with $\mathbb R^d\times \mathbb R^d\times \mathbb Z$. 
			For a.e. $\alpha \in \mathbb T$, we consider the range function 
			 $$J_{\msc A}(\alpha)=\overline{\Span} \{\tilde{\pi}_{\alpha}(\lambda_1)\msc F\varphi(\alpha):\varphi\in \msc A, \lambda_1\in \Lambda_1\} \subseteq {{\ell^2(\mathbb Z,\mc B_2(L^2(\mathbb R^d)))}}$$
		associated with the representation $\tilde{\pi}_{\alpha}$, which is defined by, $$\tilde{\pi}_{\alpha}(\lambda_1)z(m):={\pi}_{\alpha+m}(\lambda_1)\degree z(m), 
		 \mbox{where} \  (z(m))\in  {{\ell^2(\mathbb Z,\mc B_2(L^2(\mathbb R^d)))}}.$$ 
		  For $\sigma \in \mathbb R^*=\mathbb R\backslash \{0\}$ and $u=(x,y,z)\in \mathbb H^d$, the Schr\"{o}dinger representations $\pi_{\sigma}(u)$ on  $L^2(\mathbb R^d)$  is given below for $f \in L^2(\mathbb R^d),$
		 $$\pi_{\sigma}(u)f(x')=\pi_{\sigma}(x, y, z)f(x')=e^{2\pi i \sigma z}e^{-2\pi i \sigma y. x'}f(x'-x),  \    x, y, x' \in \mathbb R^d \ \mbox{and} \ z \in \mathbb R.$$  For   $ \varphi \in L^1(\mathbb H^d)\cap L^2(\mathbb H^d)$, the Fourier transform  is defined by:
		 $$\mc F\varphi(\sigma)=\int_{\mathbb H^d}\varphi(x)\pi_{\sigma}(x)dx, \  \sigma \in \mathbb R^*,$$ and
		   the fiberization map  
		 $\msc F : L^2(\mathbb H^d) \rightarrow L^2(\mathbb T; \ell^2(\mathbb Z,\HS(L^2(\mathbb R^d))))$ is given by 
		 $$\msc F(\varphi)(\alpha)(m) = |\alpha+ m|^{\frac{d}{2}}\mc F\varphi (\alpha + m).$$
	For $\msc A\subset L^2(\mathbb H^d)$, the $\Lambda$-generated system $\mc E^\Lambda(\msc A)$ will be of the form $$\mc E^\Lambda(\msc A)=\{L_{\lambda_1\lambda_0}\varphi : \varphi \in \msc A, \lambda_1\in \mathbb R^d\times \mathbb R^d\times \{0\} , \lambda_0 \in \{(0,0)\}\times\mathbb Z\}.$$	Then we can  state  Theorems \ref{T:dualnondiscrete} and  \ref{T:type I}   for the continuous setup.\\
Similarly the results can be developed for $\Lambda_1$ of the form $\Lambda_1=
	\Gamma_1\times \Gamma_2\times \{0\}$ and $\Lambda_0=\{(0,0)\times m \mathbb Z\}$, where $\Gamma_1$, $\Gamma_2$ are additive subgroups of $\mathbb R^d$ and $m \in \mathbb N$.
\end{remark}

First we proceed by defining the terms $ G_{\lambda_1}^k(\alpha)$ and $H_{\lambda_1}^k(\alpha)$ for each $\lambda_1\in \Lambda_1, k \in I$ and a.e. $\alpha \in \mathbb T^r$ as follows:   
\begin{align}
G_{\lambda_1}^k(\alpha):=\langle \msc Ff(\alpha),\tilde{\pi}_{\alpha}(\lambda_1 )\msc F\varphi_k(\alpha)\rangle \  \mbox{and} \ H_{\lambda_1}^k(\alpha):=\langle \msc Fg(\alpha),\tilde{\pi}_{\alpha}(\lambda_1)\msc F\psi_k(\alpha)\rangle, \ \mbox{for}\ f, g \in L^2(G).
\end{align}

\begin{prop}
For each $k\in I$ and $\lambda_1\in \Lambda_1$, the functions 	   $G_{\lambda_1}^k$ and $H_{\lambda_1}^k$are   in  $L^1({\mathbb T^r})$ and their Fourier transforms  $\widehat{G_{\lambda_1}^k} $ and $\widehat{H_{\lambda_1}^k}$  respectively  are   members of $\ell^2 (\mathbb Z^r)$.
\end{prop}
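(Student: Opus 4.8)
The plan is to reduce the $L^1(\mathbb{T}^r)$-membership of $G_{\lambda_1}^k$ and $H_{\lambda_1}^k$ to the intertwining property \eqref{eq:intertwining} together with the fact that $\msc Ff$ and $\msc Fg$ are elements of $L^2(\mathbb T^r;\ell^2(\mathbb Z^r,\HS(L^2(\mathbb R^d))))$. First I would fix $k\in I$ and $\lambda_1\in\Lambda_1$ and rewrite the fiberwise inner product: by Cauchy--Schwarz in the Hilbert space $\Ell$ applied pointwise in $\alpha$,
\[
|G_{\lambda_1}^k(\alpha)|\le \|\msc Ff(\alpha)\|\,\|\tilde{\pi}_{\alpha}(\lambda_1)\msc F\varphi_k(\alpha)\|,
\]
and since each $\tilde{\pi}_\alpha(\lambda_1)$ acts by composition with the unitary operators $\pi_{\alpha+m}(\lambda_1)$ on each coordinate, it is an isometry on $\Ell$, so $\|\tilde{\pi}_{\alpha}(\lambda_1)\msc F\varphi_k(\alpha)\|=\|\msc F\varphi_k(\alpha)\|$. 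Hence $|G_{\lambda_1}^k(\alpha)|\le \|\msc Ff(\alpha)\|\,\|\msc F\varphi_k(\alpha)\|$ almost everywhere, and integrating over the finite-measure set $\mathbb T^r$ and applying Cauchy--Schwarz in $L^2(\mathbb T^r)$ gives
\[
\|G_{\lambda_1}^k\|_{L^1(\mathbb T^r)}\le \|\msc Ff\|_{L^2}\,\|\msc F\varphi_k\|_{L^2}=\|f\|_{L^2(G)}\,\|\varphi_k\|_{L^2(G)}<\infty,
\]
using that $\msc F$ is unitary. The identical argument with $g$ and $\psi_k$ handles $H_{\lambda_1}^k$.

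Next I would identify the Fourier coefficients of $G_{\lambda_1}^k$ over $\mathbb Z^r$ and show they form an $\ell^2$ sequence. Using \eqref{eq:intertwining} with $\lambda=\lambda_1\lambda_0$, for $\lambda_0\in\Lambda_0$ one has $\msc F(L_{\lambda_1\lambda_0}\varphi_k)(\alpha)=e^{2\pi i\langle\alpha,\lambda_0\rangle}\tilde{\pi}_\alpha(\lambda_1)\msc F\varphi_k(\alpha)$. Since $\Lambda_0=\exp\mathbb Z X_1\cdots\exp\mathbb Z X_r$ is the integer lattice, as $\lambda_0$ runs over $\Lambda_0$ the characters $\alpha\mapsto e^{2\pi i\langle\alpha,\lambda_0\rangle}$ run over the full exponential basis of $L^2(\mathbb T^r)$ indexed by $m\in\mathbb Z^r$. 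Therefore the $m$-th Fourier coefficient of $G_{\lambda_1}^k$ is
\[
\widehat{G_{\lambda_1}^k}(m)=\int_{\mathbb T^r}\langle \msc Ff(\alpha),\tilde{\pi}_\alpha(\lambda_1)\msc F\varphi_k(\alpha)\rangle e^{-2\pi i\langle\alpha,m\rangle}\,d\alpha
=\int_{\mathbb T^r}\langle \msc Ff(\alpha),\msc F(L_{\lambda_1\lambda_0(m)}\varphi_k)(\alpha)\rangle\,d\alpha,
\]
where $\lambda_0(m)$ is the lattice point corresponding to $m$; by Parseval for the unitary $\msc F$ this equals $\langle f, L_{\lambda_1\lambda_0(m)}\varphi_k\rangle_{L^2(G)}$. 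So $\widehat{G_{\lambda_1}^k}(m)=\langle f,L_{\lambda_1\lambda_0(m)}\varphi_k\rangle$, and the same computation gives $\widehat{H_{\lambda_1}^k}(m)=\langle g,L_{\lambda_1\lambda_0(m)}\psi_k\rangle$.

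Finally I would invoke the Bessel hypothesis: $\mc E^\Lambda(\msc A)$ is a Bessel family (this is assumed throughout the section, as both $\mc E^\Lambda(\msc A)$ and $\mc E^\Lambda(\msc A')$ are Bessel in the definition of dual frames / alternate duals), hence in particular, fixing $\lambda_1$ and restricting the Bessel sum to the discrete sublattice $\{\lambda_1\lambda_0(m):m\in\mathbb Z^r\}$ with counting measure — which is dominated by the full continuous Bessel integral after an appropriate normalization of Haar measure on $\Lambda$ — yields $\sum_{m\in\mathbb Z^r}|\langle f,L_{\lambda_1\lambda_0(m)}\varphi_k\rangle|^2\le B\|f\|^2<\infty$, so $\widehat{G_{\lambda_1}^k}\in\ell^2(\mathbb Z^r)$; likewise $\widehat{H_{\lambda_1}^k}\in\ell^2(\mathbb Z^r)$ from the Bessel bound for $\mc E^\Lambda(\msc A')$. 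Alternatively, and more cleanly, one can bypass Bessel entirely: since $G_{\lambda_1}^k\in L^1(\mathbb T^r)$ its Fourier coefficients are bounded, but to get $\ell^2$ one shows directly that $G_{\lambda_1}^k\in L^2(\mathbb T^r)$ — indeed $\alpha\mapsto\langle\msc Ff(\alpha),\msc F\varphi_k(\alpha)\rangle$-type brackets over the fundamental domain with $m$-summation are square-integrable by the same Cauchy--Schwarz estimate applied in $L^2$ rather than $L^1$, giving $\|G_{\lambda_1}^k\|_{L^2(\mathbb T^r)}\le\|\msc Ff\|_\infty$-type control; then $\ell^2$-membership of the coefficients is Parseval on $\mathbb T^r$. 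The main obstacle is bookkeeping the normalization of the Haar measure on $\Lambda_1$ versus counting measure on $\Lambda_0$ so that the discrete $\ell^2$-estimate genuinely follows from the ambient Bessel property; everything else is a routine application of unitarity of $\msc F$, the intertwining relation \eqref{eq:intertwining}, and Cauchy--Schwarz. I would present the $L^1$ bound and the Fourier-coefficient identification in detail and then cite the Bessel hypothesis to close the $\ell^2$ claim.
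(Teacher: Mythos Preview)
Your approach is essentially the paper's: the $L^1$ bound via pointwise Cauchy--Schwarz plus unitarity of $\tilde{\pi}_\alpha(\lambda_1)$ and of $\msc F$ is exactly what the paper does, and your identification $\widehat{G_{\lambda_1}^k}(\lambda_0)=\langle f,L_{\lambda_1\lambda_0}\varphi_k\rangle$ is the same computation the paper uses to connect the Fourier coefficients to the Bessel sum.

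There is one inaccuracy in your $\ell^2$ step. From the continuous Bessel hypothesis $\sum_k\int_\Lambda|\langle f,L_\lambda\varphi_k\rangle|^2\,d\lambda\le B\|f\|^2$, with the measure on $\Lambda=\Lambda_1\Lambda_0$ being the product of the ambient measure on $\Lambda_1$ and counting measure on $\Lambda_0$, you cannot extract the pointwise bound $\sum_{\lambda_0}|\langle f,L_{\lambda_1\lambda_0}\varphi_k\rangle|^2\le B\|f\|^2$ for a fixed $\lambda_1$. What one actually obtains is
\[
\sum_{k\in I}\int_{\Lambda_1}\sum_{\lambda_0\in\Lambda_0}\bigl|\widehat{G_{\lambda_1}^k}(\lambda_0)\bigr|^2\,d\lambda_1<\infty,
\]
and hence $\widehat{G_{\lambda_1}^k}\in\ell^2(\mathbb Z^r)$ only for a.e.\ $\lambda_1\in\Lambda_1$ via Fubini. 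This is precisely how the paper argues; drop the claimed uniform bound and state the conclusion for a.e.\ $\lambda_1$. Also, your proposed alternative---showing $G_{\lambda_1}^k\in L^2(\mathbb T^r)$ directly---does not go through without extra hypotheses: the pointwise estimate $|G_{\lambda_1}^k(\alpha)|\le\|\msc Ff(\alpha)\|\,\|\msc F\varphi_k(\alpha)\|$ is a product of two $L^2(\mathbb T^r)$ functions, which in general lies only in $L^1(\mathbb T^r)$, and no ``$\|\msc Ff\|_\infty$-type control'' is available. Stick with the Bessel argument.
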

\begin{proof}
Applying  Cauchy-Schwarz inequality and using the property  of $\msc F$, we have 
{\small\begin{align*}
		\int_{\mathbb T^r}|G_{\lambda_1}^k(\alpha)|d\alpha& 
		\leq \left(	\int_{\mathbb T^r} \sum_{m\in \mathbb 
			Z^r}\|\msc Ff(\alpha)(m)\|^2 d\alpha \right)^{1/2}
		\left(	\int_{\mathbb T^r} \sum_{m\in \mathbb 
			Z^r}\|\msc FL_{\lambda_1}\varphi_k(\alpha)(m)\|^2 d\alpha \right)^{1/2}\\
		&=\|\msc F f\| \|\msc FL_{\lambda_1} \varphi_k\|=\|f\|\|\varphi_k\|<\infty,
\end{align*}}
since $\msc F(L_{\lambda_1}\varphi_k)(\alpha)= e^{2\pi i \langle\alpha,0\rangle_{\mathbb T^r}}\tilde{\pi}_{\alpha}(\lambda_1)\msc F\varphi(\alpha)=\tilde{\pi}_{\alpha}(\lambda_1)\msc F\varphi(\alpha)$ and the left translation $L_{\lambda_1}$ is an isometry. 
Hence $G_{\lambda_1}^k \in L^1 (\mathbb T^r)$.  Similarly,  $H_{\lambda_1}^k \in L^1 (\mathbb T^r)$.
The  Fourier transform  of $G_{\lambda_1}^k $ and $H_{\lambda_1}^k$   at $\lambda_0\in \Lambda_0$ is given by 
$\widehat{G_{\lambda_1}^k }(\lambda_0)=\int_{\mathbb T^r}G_{\lambda_1}^k (\alpha)e^{-2\pi i\langle \alpha, \lambda_0\rangle}d\alpha,  \ \mbox{and} \ \widehat{H_{\lambda_1}^k} (\lambda_0)=\int_{\mathbb T^r}H_{\lambda_1}^k (\alpha)e^{-2\pi i\langle \alpha, \lambda_0\rangle}d\alpha .$
Then the sequence  $\{\widehat{G_{\lambda_1}^k }(\lambda_0)\}_{\lambda_0\in \Lambda_0}\in \ell^2(\mathbb Z^r)$,  follows by  observing the Bessel property of $\mc E^\Lambda(\msc A)$  and   properties of $\msc F$  (\ref{T:fiberization})  in  the following calculations
{\small\begin{align*}
		\infty>&\sum_{k \in I}\int_{\Lambda}\left | \langle f, L_{\lambda}\varphi_k\right \rangle |^2 d\lambda
		=\sum_{k \in I}\int_{ \Lambda_1}\sum_{ \lambda_0 \in \Lambda_0}\left | \int_{\mathbb T^r}\langle \msc Ff(\alpha), \tilde{\pi}_{\alpha}(\lambda_1)\msc F\varphi_k(\alpha) \rangle e^{-2\pi i\langle \alpha,\lambda_0\rangle} d\alpha\right|^2 d\lambda_1 \\
		&=\sum_{k \in I}\int_{ \Lambda_1}\sum_{ \lambda_0 \in \Lambda_0}|\widehat{G_{\lambda_1}^k}(\lambda_0)|^2d\lambda_1.
\end{align*}}
Similarly, we have $\{\widehat{H_{\lambda_1}^k}(\lambda_0)\}_{\lambda_0\in\Lambda_0}\in \ell^2 (\mathbb Z^r)$.  
\end{proof}

\begin{prop}\label{P: Bessel-charecteriazation}  For all $f,g \in L^2(G)$, we have 
\begin{align*}
	\sum_{k\in I}\int_{\Lambda}\left \langle f, L_{\lambda}\varphi_k\right \rangle\langle L_{\lambda}\psi_k,g\rangle d\lambda
	=\sum_{k\in I}\int_{\Lambda_1}\int_{\mathbb T^r} G_{\lambda_1}^k(\alpha)\ol{H_{\lambda_1}^k(\alpha)}d\alpha d\lambda_1,
\end{align*}
where $ \mc E^\Lambda(\msc A)$  and $\mc  E^\Lambda(\msc A')$ are Bessel systems. 
\end{prop}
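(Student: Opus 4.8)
The plan is to start from the left-hand side and push the inner products through the fiberization map $\msc F$, using that $\msc F$ is unitary from $L^2(G)$ onto $L^2(\mathbb T^r;\ell^2(\mathbb Z^r,\HS(L^2(\mathbb R^d))))$ and that it intertwines left translation as in \eqref{eq:intertwining}. Writing $\lambda=\lambda_1\lambda_0$ with $\lambda_1\in\Lambda_1$, $\lambda_0\in\Lambda_0$, and $d\lambda=d\lambda_1\,d\lambda_0$ (counting measure on $\Lambda_0$), Plancherel gives for each fixed $k,\lambda_1,\lambda_0$
$$
\langle f, L_\lambda\varphi_k\rangle=\int_{\mathbb T^r}\langle \msc Ff(\alpha),\msc F(L_{\lambda_1\lambda_0}\varphi_k)(\alpha)\rangle\,d\alpha
=\int_{\mathbb T^r}G_{\lambda_1}^k(\alpha)\,e^{-2\pi i\langle\alpha,\lambda_0\rangle}\,d\alpha=\widehat{G_{\lambda_1}^k}(\lambda_0),
$$
by \eqref{eq:intertwining} and the definition of $G_{\lambda_1}^k$; similarly $\langle L_\lambda\psi_k,g\rangle=\overline{\langle g,L_\lambda\psi_k\rangle}=\overline{\widehat{H_{\lambda_1}^k}(\lambda_0)}$. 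Hence the summand over $\lambda_0$ is $\widehat{G_{\lambda_1}^k}(\lambda_0)\,\overline{\widehat{H_{\lambda_1}^k}(\lambda_0)}$.

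Next I would sum over $\lambda_0\in\Lambda_0\cong\mathbb Z^r$ and apply Parseval's identity on $L^2(\mathbb T^r)$. By the preceding proposition, $G_{\lambda_1}^k,H_{\lambda_1}^k\in L^1(\mathbb T^r)$ with Fourier coefficients in $\ell^2(\mathbb Z^r)$, so they both lie in $L^2(\mathbb T^r)$ and Parseval applies:
$$
\sum_{\lambda_0\in\Lambda_0}\widehat{G_{\lambda_1}^k}(\lambda_0)\,\overline{\widehat{H_{\lambda_1}^k}(\lambda_0)}
=\int_{\mathbb T^r}G_{\lambda_1}^k(\alpha)\,\overline{H_{\lambda_1}^k(\alpha)}\,d\alpha.
$$
Integrating over $\lambda_1\in\Lambda_1$ and summing over $k\in I$ then yields the asserted identity.

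The one point requiring care — and the main obstacle — is justifying the interchange of the order of summation/integration: specifically, that $\sum_k\int_{\Lambda_1}\sum_{\lambda_0}\langle f,L_\lambda\varphi_k\rangle\langle L_\lambda\psi_k,g\rangle\,d\lambda_1$ is absolutely convergent so that Fubini applies and the grouping $\lambda=\lambda_1\lambda_0$ is legitimate. I would handle this by a Cauchy–Schwarz estimate: the absolute value of the full sum is bounded by
$$
\Big(\sum_k\int_\Lambda|\langle f,L_\lambda\varphi_k\rangle|^2\,d\lambda\Big)^{1/2}\Big(\sum_k\int_\Lambda|\langle L_\lambda\psi_k,g\rangle|^2\,d\lambda\Big)^{1/2},
$$
and each factor is finite by the Bessel hypothesis on $\mc E^\Lambda(\msc A)$ and $\mc E^\Lambda(\msc A')$ respectively. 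This absolute convergence licenses all the rearrangements above (Fubini for the $\lambda_1$-integral against the $\lambda_0$-sum and the $k$-sum, and the identification of the iterated sum with the integral over $\Lambda$ via $d\lambda=d\lambda_1\,d\lambda_0$). With that in place the computation is a routine chain of equalities, exactly mirroring the estimates already carried out in the previous proposition.
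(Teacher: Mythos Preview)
Your proof is correct and follows essentially the same route as the paper's: apply the fiberization $\msc F$ and the intertwining relation to identify $\langle f,L_\lambda\varphi_k\rangle=\widehat{G_{\lambda_1}^k}(\lambda_0)$ and $\langle L_\lambda\psi_k,g\rangle=\overline{\widehat{H_{\lambda_1}^k}(\lambda_0)}$, then invoke Parseval on $\mathbb T^r$ to collapse the $\Lambda_0$-sum. Your additional Cauchy--Schwarz justification of absolute convergence (via the Bessel bounds) is a welcome bit of rigor that the paper leaves implicit.
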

\begin{proof}
By using the  properties of  the  map $\msc F$ , we have
{\small\begin{align*}
		\sum_{k\in I}\int_{\Lambda}\left \langle f, L_{\lambda}\varphi_k\right \rangle\langle L_{\lambda}\psi_k,g\rangle d\lambda
		&=	\sum_{k\in  I}\int_{\Lambda}\left \langle \msc Ff, \msc FL_{\lambda}\varphi_k\right \rangle\langle \msc FL_{\lambda}\psi_k,\msc Fg\rangle d\lambda \\
		&=\sum_{k\in I}\int_{\Lambda}\left(\int_{\mathbb T^r}\langle \msc Ff(\alpha),\msc FL_\lambda\varphi_k(\alpha)\rangle d\alpha \right)\left(\int_{\mathbb T^r}\langle \msc FL_\lambda\psi_k(\alpha),\msc Fg(\alpha)\rangle d\alpha\right) d\lambda\\
		&=\sum_{k \in I}\int_{ \Lambda}\int_{\mathbb T^r}\langle \msc Ff(\alpha),\tilde{\pi}_{\alpha}(\lambda)\msc F\varphi_k(\alpha)\rangle d\alpha\int_{\mathbb T^r}\langle \tilde{\pi}_{\alpha}(\lambda)\msc F\psi_k(\alpha),\msc Fg(\alpha)\rangle d\alpha d\lambda .
\end{align*}}
Writing $\lambda=\lambda_1 \lambda_0,$ where $\lambda_1\in \Lambda_1, \lambda_0\in \Lambda_0,$  we get  $ \tilde{\pi}_{\alpha}(\lambda_1 \lambda_0)=e^{2\pi i\langle \alpha,\lambda_0\rangle}\tilde{\pi}_{\alpha}(\lambda_1)$, and hence the above  expression can be written  as
{\small
	\begin{align*}
		\sum_{k\in I}\int_{\Lambda} & \left \langle f, L_{\lambda}\varphi_k\right \rangle    \langle L_{\lambda}\psi_k,g\rangle d\lambda
		=\sum_{k \in I}\int_{ \Lambda_1} \sum_{ \lambda_0 \in \Lambda_0}\left(\int_{\mathbb T^r}\langle \msc Ff(\alpha),\tilde{\pi}_{\alpha}(\lambda_1 \lambda_0)\msc F\varphi_k(\alpha)\rangle d\alpha\right)\left(\int_{\mathbb T^r}\langle \tilde{\pi}_{\alpha}(\lambda_1\lambda_0)\msc F\psi_k(\alpha), \msc Fg(\alpha)\rangle d\alpha \right)d\lambda_1\\
		&=\sum_{ k \in I}\int_{ \Lambda_1}\sum_{ \lambda_0 \in \Lambda_0}\left(\int_{\mathbb T^r}e^{-2\pi i\langle \alpha,\lambda_0\rangle}\langle \msc Ff(\alpha),\tilde{\pi}_{\alpha}(\lambda_1)\msc F\varphi_k(\alpha)\rangle d\alpha\right)\left(\int_{\mathbb T^r}e^{2\pi i\langle \alpha,\lambda_0\rangle}\langle \tilde{\pi}_{\alpha}(\lambda_1)\msc F\psi_k(\alpha),\msc Fg(\alpha)\rangle d\alpha\right) d\lambda_1\\
		&=\sum_{k \in I}\int_{ \Lambda_1} \sum_{ \lambda_0 \in \Lambda_0}\left(\int_{\mathbb T^r}e^{-2\pi i\langle \alpha,\lambda_0\rangle} G_{\lambda_1}^k(\alpha) d\alpha\right)\left(\int_{\mathbb T^r}e^{2\pi i\langle \alpha,\lambda_0\rangle}\ol{H_{\lambda_1}^k(\alpha)} d\alpha \right)d\lambda_1
		\\
		&=\sum_{k\in I}\int_{ \Lambda_1}\sum_{ \lambda_0 \in \Lambda_0}\ \widehat{G_{\lambda_1}^k}(\lambda_0)\ol{\widehat{H_{\lambda_1}^k(\lambda_0)}}d\lambda_1 =\sum_{k\in I}\int_{\Lambda_1}\langle \widehat{G_{\lambda_1}^k},\widehat{H_{\lambda_1}^k}\rangle_{\ell^2(\mathbb Z^r)} d\lambda_1 \\
		&=\sum_{ k \in I}\int_{ \Lambda_1}\langle {G_{\lambda_1}^k},{H_{\lambda_1}^k} \rangle d\lambda_1 =\sum_{k \in I}\int_{ \Lambda_1}\int_{\mathbb T^r} {G_{\lambda_1}^k(\alpha)}\overline{{{H_{\lambda_1}^k(\alpha)}}}d\alpha d\lambda_1.
	\end{align*}
}
Hence the result follows. 
\end{proof}

\begin{proof}[Proof of Theorem \ref{T:dualnondiscrete}]  We prove the result for alternate duals and proceeding by a similar manner we can conclude for  oblique duals.  For $f, g\in \mc S^\Lambda(\msc A)$, we have
{\small	\begin{align}\label{eq1-Thm4.1}
		\sum_{k \in I}\int_{ \Lambda}\langle f,L_\lambda\varphi_k\rangle{\langle L_\lambda\psi_k,g\rangle}d\lambda &=\sum_{k\in I}\int_{\Lambda_1}\int_{\mathbb T^r} G_{\lambda_1}^k(\alpha)\ol{H_{\lambda_1}^k(\alpha)}d\alpha d\lambda_1\\
		&=\sum_{k \in I}\int_{ \Lambda_1}\int_{\mathbb T^r} \langle \msc Ff(\alpha),\tilde{\pi}_{\alpha}(\lambda_1)\msc F\varphi_k(\alpha)\rangle{\langle \tilde{\pi}_{\alpha}(\lambda_1)\msc F \psi_k(\alpha), \msc Fg(\alpha)\rangle}d\alpha d\lambda_1  \notag,
\end{align}}
due to the Proposition \ref{P: Bessel-charecteriazation}.  Assume  the system  $\{\tilde{\pi}_{\alpha}(\lambda_1)\msc F\psi_k(\alpha):k \in I, \lambda_1\in \Lambda_1\}$ is an  alternate dual of  the  continuous frame $\{\tilde{\pi}_{\alpha}(\lambda_1)\msc F\varphi_k(\alpha):k \in I,\lambda_1\in \Lambda_1\}$ for  a.e. $\alpha \in \mathbb T^r$, i.e., 
$$\sum_{k \in I}\int_{ \Lambda_1}\langle a,\tilde{\pi}_{\lambda_1}(\alpha) \msc F\varphi_k (\alpha)\rangle\langle \tilde{\pi}_{\lambda_1}(\alpha)  \msc F\psi_k(\alpha),b\rangle d\lambda_1 =\langle a,b\rangle,  \  \mbox{for all} \ a, b\in J_{\msc A}(\alpha).
$$
Employing (\ref{eq1-Thm4.1}),  we obtain 
$\sum_{k \in I}\int_{ \Lambda}\langle f,L_\lambda\varphi_k\rangle{\langle L_\lambda\psi_k,g\rangle}d\lambda 
=\int_{\mathbb T^r} \langle \msc Ff(\alpha), \msc Fg(\alpha)\rangle d\alpha =\langle f,g \rangle,$
since $f\in \mc S^\Lambda(\msc A)$  implies   $\msc Ff(\alpha) \in J_{\msc A}(\alpha)$ for a.e. $\alpha \in \mathbb T^r$. Therefore, 
$\mc E^\Lambda(\msc A')$ is an alternate dual of $\mc E^\Lambda(\msc A)$.

Conversely, assume that  $\mc E^\Lambda(\msc A')$ is an alternate dual of $\mc E^\Lambda(\msc A)$, i.e.,   $\sum_{k \in I}\int_{\Lambda}\left \langle f, L_{\lambda}\varphi_k\right \rangle\langle L_{\lambda}\psi_k,g\rangle d\lambda
=\langle f, g\rangle,$ for all $f, g \in \mc S^\Lambda(\msc A)$. Now using    (\ref{eq1-Thm4.1}), we obtain  
{\small\begin{align}\label{dualconverse}
		\int_{{\mathbb T^r}}\langle \msc Ff(\alpha), \msc Fg(\alpha)\rangle d\alpha
		&=	<f, g>=\sum_{k\in I}\int_{\Lambda}\left \langle f, L_{\lambda}\varphi_k\right \rangle\langle L_{\lambda}\psi_k,g\rangle d\lambda
		\\
		&=\sum_{  k \in I}\int_{ \Lambda_1}\int_{\mathbb T^r} \langle \msc Ff(\alpha),\tilde{\pi}_{\alpha}(\lambda_1)\msc F\varphi_k(\alpha)\rangle{\langle \tilde{\pi}_{\alpha}(\lambda_1)\msc F\psi_k(\alpha),\msc Fg(\alpha)\rangle}d\alpha d\lambda_1.\nonumber
\end{align}}
Then  the expression
$\sum_{ k \in I}\int_{ \Lambda_1} \langle \msc Ff(\alpha),\tilde{\pi}_{\alpha}(\lambda_1)\msc F\varphi_k(\alpha)\rangle{\langle  \tilde{\pi}_{\alpha}(\lambda_1)\msc F\psi_k(\alpha),\msc Fg(\alpha)\rangle} d\lambda_1$ is equal to $\langle \msc Ff(\alpha), \msc Fg(\alpha)\rangle, $ for a.e. $\alpha\in \mathbb T^r.$ Suppose this does not hold. Then there exists a measurable set $\msc D\ss \mathbb T^r$ with positive measure such that  it is not equal for a.e. $\alpha \in \msc D$. 

Let  $\{x_n\}_{n\in \mathbb N}$ be  a  countable dense subset of  $\ell^2(\mathbb Z^r, \HS(L^2(\mathbb R^d)))$ and let $P_{J_{\msc A}}(\alpha)$ be  an orthogonal projection on $J_{\msc A}(\alpha)$. Clearly $\{P_{J_{\msc A}}(\alpha)x_n\}_{n\in \mathbb N}$ is dense in $J_{\msc A}(\alpha)$. For each $i,j\in \mathbb N$, we  consider the set 
{\small	\begin{align*}
		S_{i, j}= \Big\{\alpha \in \mathbb T^r: \rho_{i, j}(\alpha) :=  \sum_{k \in I}\int_{ \Lambda_1} \langle P_{J_{\msc A}}(\alpha)x_i ,\tilde{\pi}_{\alpha}(\lambda_1)\msc F\varphi_k(\alpha)\rangle{\langle \tilde{\pi}_{\alpha}(\lambda_1) \msc F\psi_k(\alpha),P_{J_{\msc A}}(\alpha)x_j\rangle} d\lambda_1-\langle  P_{J_{\msc A}}(\alpha)x_i, P_{J_{\msc A}}(\alpha)x_j\rangle\neq \{0\} \Big\}.
\end{align*}}
Then 	there exist $i_0, j_0\in \mathbb N$ such that the set $E:=S_{i_0, j_0}\cap \msc D$ is of positive measure, and hence one of the  sets, viz.,  
$E_1=\{\alpha \in \mathbb T^r: \mbox{Re}(\rho_{i_0,j_0})>0\}$,   	$E_2=\{\alpha \in \mathbb T^r: \mbox{Re}(\rho_{i_0,j_0})<0\}$, 	 
$E_3=\{\alpha \in \mathbb T^r: \mbox{Im}(\rho_{i_0,j_0})>0\}$, and  	 $E_4=\{\alpha \in \mathbb T^r: \mbox{Im}(\rho_{i_0,j_0})<0\},$ should have positive measure, assume $E_1$. By choosing  $\msc Ff(\alpha)=\chi_{E_1} P_{J_{\msc A}}(\alpha)x_{j_0}$ and $\msc Fg(\alpha)=\chi_{E_1} P_{J_{\msc A}}(\alpha)x_{i_0}$, we have  $f,g\in \mc S^\Lambda(\msc A)$ and in view of  (\ref{dualconverse}),   we reach on a contradiction that the measure of $\msc D$ is positive. Similarly we get contradictions with respect to the other sets $E_2$,    $E_3$ and  $E_4$.  Hence the result follows for an alternate dual. 
\end{proof}


%

\begin{prop}\label{L:dualtype2} For $k \in I$ and $\lambda_1 \in \Lambda_1$, let us assume a measurable $\mathbb Z^r$-periodic function $p_{\lambda_1}^k$ satisfying 
$\sum_{k \in I}\int_{ \Lambda_1}\int_{\mathbb T^r} |p_{\lambda_1}^k(\alpha)|^2d\alpha d\lambda_1<\infty$.  Then, for a Bessel system $\mc E^{\Lambda} (\msc A)$, the following are equivalent:  
$$
(i)  \ \sum_{k \in I}\int_{ \Lambda_1}\int_{\mathbb T^r}G_{\lambda_1}^k(\alpha) p_{\lambda_1}^k(\alpha)d\alpha d\lambda_1 =0. \qquad 
(ii)	\  \sum_{k \in I}\int_{ \Lambda_1}G_{\lambda_1}^k(\alpha) p_{\lambda_1}^k(\alpha)d\lambda_1=0, \  \mbox{for a.e.} \ \alpha \in \mathbb T^r.
$$
\end{prop}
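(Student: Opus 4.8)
The plan is to establish the equivalence through the standard ``localization'' argument, which is the same mechanism already used in the converse direction of the proof of Theorem \ref{T:dualnondiscrete}. The implication $(ii)\Rightarrow(i)$ is immediate: integrating the pointwise identity $(ii)$ over $\alpha\in\mathbb T^r$ and applying Fubini (justified by the integrability hypothesis on $p_{\lambda_1}^k$ together with the fact, established in the first Proposition of this section, that $G_{\lambda_1}^k\in L^1(\mathbb T^r)$ with $\{\widehat{G_{\lambda_1}^k}(\lambda_0)\}_{\lambda_0}\in\ell^2(\mathbb Z^r)$) yields $(i)$. So the content is in $(i)\Rightarrow(ii)$.

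For $(i)\Rightarrow(ii)$, first I would observe that the conclusion in $(ii)$ is really a statement for \emph{every} choice of $f\in L^2(G)$ at once, since $G_{\lambda_1}^k(\alpha)=\langle\msc Ff(\alpha),\tilde\pi_\alpha(\lambda_1)\msc F\varphi_k(\alpha)\rangle$ depends on $f$. The strategy is: fix a countable dense subset $\{x_n\}_{n\in\mathbb N}$ of $\ell^2(\mathbb Z^r,\HS(L^2(\mathbb R^d)))$, and for each $n$ define the $\mathbb Z^r$-periodic scalar function
\[
q_n(\alpha):=\sum_{k\in I}\int_{\Lambda_1}\langle x_n,\tilde\pi_\alpha(\lambda_1)\msc F\varphi_k(\alpha)\rangle\,p_{\lambda_1}^k(\alpha)\,d\lambda_1,
\]
which is well defined a.e. and locally integrable by Cauchy--Schwarz and the Bessel property of $\mc E^\Lambda(\msc A)$ (exactly the estimate in the first Proposition). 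If $q_{n_0}$ were nonzero on a set $\msc D$ of positive measure for some $n_0$, then one of the four sets where $\mathrm{Re}\,q_{n_0}$ or $\mathrm{Im}\,q_{n_0}$ has a fixed sign --- say $E\subseteq\msc D$ with $\mathrm{Re}\,q_{n_0}>0$ on $E$, $|E|>0$ --- and I would then set $\msc Ff(\alpha)=\chi_E(\alpha)x_{n_0}$, which defines an $f\in L^2(G)$ (here one may even drop the projection $P_{J_{\msc A}}$ used in Theorem \ref{T:dualnondiscrete}, since $(i)$ is assumed to hold for \emph{all} $f\in L^2(G)$, not only those with $\msc Ff(\alpha)\in J_{\msc A}(\alpha)$). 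For this $f$,
\[
0=\sum_{k\in I}\int_{\Lambda_1}\int_{\mathbb T^r}G_{\lambda_1}^k(\alpha)\,p_{\lambda_1}^k(\alpha)\,d\alpha\,d\lambda_1
=\int_{E}q_{n_0}(\alpha)\,d\alpha,
\]
whose real part is strictly positive --- a contradiction. Hence $q_n\equiv 0$ a.e. for every $n$.

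To finish, I would promote the vanishing of all $q_n$ to the desired pointwise identity: the map $a\mapsto \sum_{k\in I}\int_{\Lambda_1}\langle a,\tilde\pi_\alpha(\lambda_1)\msc F\varphi_k(\alpha)\rangle\,p_{\lambda_1}^k(\alpha)\,d\lambda_1$ is, for a.e. fixed $\alpha$, a bounded conjugate-linear functional on $\ell^2(\mathbb Z^r,\HS(L^2(\mathbb R^d)))$ (bounded with constant $\le B^{1/2}(\sum_k\int_{\Lambda_1}|p_{\lambda_1}^k(\alpha)|^2d\lambda_1)^{1/2}$, which is finite for a.e. $\alpha$). Since it vanishes on the dense set $\{x_n\}$, it vanishes identically; taking $a=\msc Ff(\alpha)$ for an arbitrary $f\in L^2(G)$ gives $(ii)$. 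The one point requiring a little care --- the main obstacle --- is the measure-theoretic bookkeeping: one must check that the null set outside of which ``$q_n(\alpha)=0$'' fails can be taken independent of $n$ (take the countable union), that the pointwise Cauchy--Schwarz bound $\sum_k\int_{\Lambda_1}|p_{\lambda_1}^k(\alpha)|^2d\lambda_1<\infty$ holds a.e.\ $\alpha$ (which follows from the global integrability hypothesis on $p$ via Tonelli), and that $\alpha\mapsto\chi_E(\alpha)x_{n_0}$ genuinely lies in the range of $\msc F$, i.e.\ defines an $L^2(G)$ function --- all of which are routine given that $\msc F$ is unitary onto $L^2(\mathbb T^r;\ell^2(\mathbb Z^r,\HS(L^2(\mathbb R^d))))$.
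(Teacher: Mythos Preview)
Your proposal is correct and follows essentially the same localization-by-contradiction argument as the paper's proof: a countable dense subset of $\ell^2(\mathbb Z^r,\HS(L^2(\mathbb R^d)))$, the real/imaginary-part splitting, and a suitably chosen $f$ via $\msc F^{-1}$ to force a contradiction, with $(ii)\Rightarrow(i)$ by integration. The only difference is that the paper retains the projection $P_{J_{\msc A}}(\alpha)$ when building the test function (as in the proof of Theorem \ref{T:dualnondiscrete}), whereas you correctly observe it can be dropped here since $G_{\lambda_1}^k(\alpha)$ depends on $\msc Ff(\alpha)$ only through its $J_{\msc A}(\alpha)$-component; your version also spells out the density/continuity step more explicitly.
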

\begin{proof} Assume (i).  For (ii) we proceed by the contradiction.  	If (ii) does not hold true,  there exists  a measurable set $\msc D\ss \mathbb T^r$   of  positive measure such that $\sum_{k\in I}\int_{ \Lambda_1}G_{\lambda_1}^k(\alpha) p_{\lambda_1}^k(\alpha)d\lambda_1\neq 0$, for  a.e.  $\alpha \in \msc D$. Let $\{x_i\}_{i\in \mathbb N}$ be a countable dense subset of $\ell^2(\mathbb Z^r, \HS(L^2(\mathbb R^d)))$ and for a.e. $\alpha \in \mathbb T^r$,   let  $P_{J_{\msc A}}(\alpha)$ be an orthogonal projection on $J_{\msc A}(\alpha)$. Then   $\{ P_{J_{\msc A}}(\alpha)x_i\}_{i\in \mathbb N}$ is dense in $J_{\msc A}(\alpha)$ and there exists  an  $i_0\in \mathbb N$ such that $h(\alpha):=\sum_{k \in I}\int_{\Lambda_1}\langle  P_{J_{\msc A}}(\alpha)x_{i_0},\tilde{\pi}_{\alpha}(\lambda_1)\msc F\varphi(\alpha)\rangle p_{\lambda_1}^{k}(\alpha)d\lambda_1\neq 0$ on some measurable set   $Y$    in $\msc D$ having positive measure. Now the proof follows by considering the real and imaginary parts, and by choosing suitable function, the way we did for the proof of Theorem \ref{T:dualnondiscrete} . 
%
\noindent Conversely, we  assume (ii).   The part (i) follows  easily   by  integrating (ii) with respect to  the torus $\mathbb T^r$. 
\end{proof}

\begin{proof}[Proof of Theorem \ref{T:type I}]  We first prove the result for type-I duals and then for type-II duals. We assume   $T_{\msc A(\alpha)}$ and  $T_{\msc A'(\alpha)}$  be   analysis operators associated to the Bessel systems 
$\msc A(\alpha):=\{\tilde{\pi}_{\alpha}(\lambda_1)\msc F\varphi_k(\alpha):k\in I, \lambda_1\in \Lambda_1\}$ and $\msc A'(\alpha):=\{\tilde{\pi}_{\alpha}(\lambda_1)\msc F\psi_k(\alpha):k\in I, \lambda_1\in \Lambda_1\}$, respectively. It's adjoint operators are $T^*_{{\msc A}(\alpha)}$ and $T^*_{{\msc A'}(\alpha)}$.

\noindent\textbf{For Type-I duals:}  	In view of Theorem \ref{T:dualnondiscrete}, it suffices to show       $\mbox{range}\,  T^*_{\mc E^\Lambda(\msc A')} \ss \mbox{range}\,  T^*_{\mc E^\Lambda (\msc A)}$ if and only if $\mbox{range}\,  T^*_{\msc A'(\alpha)} \ss \mbox{range}\,  T^*_{\msc A(\alpha)}$. Equivalently, 
$\left[T^*_{\mc E^\Lambda(\msc A')} (L^2(I\times \Lambda)) \right]\cap \mc S^\Lambda(\msc A') \ss \left[T^*_{\mc E^\Lambda(\msc A)} (L^2(I \times \Lambda))\right] \cap S^\Lambda(\msc A),$
if and only if for a.e. $\alpha \in \mathbb T^r$, 
$\left[T^*_{\msc A'(\alpha)} (L^2(I\times \Lambda_1))\right] \cap J_{ A'}(\alpha) \ss \left[T^*_{ \msc  A (\alpha)} (L^2(I\times \Lambda_1))\right] \cap J_{\msc A}(\alpha).$  For this it is enough to verify   on the generators $\varphi_k, \psi_k, L_\lambda\varphi_k,$ and $L_\lambda\psi_k$,  for $k \in I$ and $\lambda \in \Lambda$. Then   the result follows   just by observing:    $L_\lambda\psi_k\in \mc S^\Lambda(\msc A)$, for  $k\in I$ and $\lambda\in \Lambda$  if and only if  for a.e. $\alpha \in \mathbb T^r$, $\msc FL_{\lambda_1}\psi_{k'} (\alpha) \in J_{\msc A}(\alpha)$, for $k' \in I, \lambda_1 \in \Lambda_1$.

\noindent \textbf{For Type-II duals:} In view of Theorem \ref{T:dualnondiscrete}, it suffices to show    	range $T_{\mc E^\Lambda(\msc A')}\ss \mbox{range} \ T_{\mc E^\Lambda(\msc A)}$ if and only if for a.e. $\alpha\in \mathbb T^r$, range $T_{\msc A'}(\alpha)\ss \mbox{range} \ T_{\msc A(\alpha)}$.  
First  we assume   range $T_{{\msc A'}(\alpha)}\ss \ \mbox{range} \ T_{{\msc A}(\alpha)}$, a.e. $\alpha\in\mathbb T^r$. Then the family  $\{a_{k,\lambda_1}\}_{k \in I, \lambda_1\in \Lambda_1}$ in $L^2(I \times \Lambda_1)$ satisfies for a.e. $\alpha \in \mathbb T^r$.   {\small\begin{align}\label{eq:dualtype2range}
		\sum_{ k \in I}\int_{ \Lambda_1}\langle h, \tilde{\pi}_{\alpha}(\lambda_1) \msc F\varphi_k(\alpha)\rangle \overline{a_{k,{\lambda_1}}}d\lambda_1=0, \forall h\in J_{\msc A}(\alpha)\implies  \sum_{ k\in I}\int_{ \Lambda_1}\langle h, \tilde{\pi}_{\alpha}(\lambda_1)\msc F\psi_k(\alpha)\rangle\overline{a_{k,\lambda_1}}d\lambda_1=0,  \forall h\in J_{\msc A}(\alpha)
\end{align}}
To prove  range $T_{\mc E^\Lambda(\msc A')}\ss \mbox{range} \ T_{\mc E^\Lambda(\msc A)}$, 	we calculate the following for $\{c_{k,\lambda}\}_{k \in I, \lambda\in \Lambda}=\{c_{k,\lambda_0, \lambda_1}\}_{k \in I, \lambda_0\in \Lambda_0, \lambda_1\in \Lambda_1}$ in $L^2(I \times \Lambda)$ as follows:
{\small\begin{align*}
		\sum_{ k \in I}	\int_{\Lambda}\langle f, L_{\lambda}\varphi_k\rangle\overline{c_{k,\lambda}} d\lambda
		&=\sum_{k \in I}\int_{ \Lambda_1}\sum_{ \lambda_0 \in \Lambda_0}\int_{\mathbb T^r}\langle \msc Ff(\alpha), \msc FL_{\lambda_1 \lambda_0}\varphi_k(\alpha)\rangle  \overline{c_{k,\lambda_1,\lambda_0}} d\alpha d\lambda_1\\
		&	=\sum_{k\in I}\int_{ \Lambda_1}\sum_{ \lambda_0 \in \Lambda_0}\int_{\mathbb T^r}\langle \msc Ff(\alpha), \msc FL_{\lambda_1}\varphi_k(\alpha)\rangle e^{-2\pi i \langle\alpha, \lambda_0\rangle}  \overline{c_{k,\lambda_1, \lambda_0}} d\alpha d\lambda_1\\ 
		&	=\sum_{ k \in I}\int_{ \Lambda_1}\int_{\mathbb T^r}\langle \msc Ff(\alpha), \tilde{\pi}_{\alpha}(\lambda_1)\msc F\varphi_k(\alpha)\rangle \overline{p_{\lambda_1}^k(\alpha)}  d\alpha d\lambda_1,  
\end{align*}}
where $p_{\lambda_1}^k(\alpha)=\sum_{\lambda_0\in \Lambda_{0}}{c_{k,\lambda_1, \lambda_0}}e^{2\pi i\langle \alpha,\lambda_0\rangle}$ satisfies   $$\sum_{k \in I}\int_{\Lambda_1}\int_{\mathbb T^r}|p_{\lambda_1}^k(\alpha)|^2d\alpha d\lambda_1  =\sum_{k \in I}\int_{ \Lambda_1}\sum_{\lambda_0\in \Lambda_0}|c_{k,\lambda_1,\lambda_0}|^2d\lambda_1<\infty.$$
Similarly, we can obtain 
$\sum_{ k \in I}\int_{\Lambda}\langle f, L_{\lambda}\psi_k\rangle\overline{c_{k,\lambda}}d\lambda=\sum_{ k\in I}\int_{ \Lambda_1}\int_{\mathbb T^r}\langle \msc Ff(\alpha),\tilde{\pi}_{\alpha}(\lambda_1) \msc F\psi_k)(\alpha)\rangle \overline{p_{\lambda_1}^k(\alpha)}  d\alpha d\lambda_1$. 

By assuming $\sum_{k \in I}	\int_{ \Lambda}\langle f, L_{\lambda}\varphi_k\rangle\overline{c_{k,\lambda}}d\lambda =0$ for all $f\in \mc S^\Lambda(\msc A)$,  and applying Proposition \ref{L:dualtype2} we get  for a.e. $\alpha\in \mathbb T^r $,
\begin{align}\label{eq:dual2i}
	\sum_{k \in I}\int_{\Lambda_1}\langle \msc Ff(\alpha),\tilde{\pi}_{\alpha}(\lambda_1) \msc F\varphi_k(\alpha)\rangle \overline{p_{\lambda_1}^k(\alpha)} d\lambda_1=0 \implies  \sum_{ k \in I}\int_{ \Lambda_1}\langle \msc Ff(\alpha), \tilde{\pi}_{\alpha}(\lambda_1) \msc F\psi_k(\alpha)\rangle \overline{p_{\lambda_1}^k(\alpha)} d\lambda_1 =0,
\end{align} 
from (\ref{eq:dualtype2range}). 
Therefore, we get 	$\sum_{ k \in I}\int_{\lambda\in \Lambda}\langle f, L_{\lambda}\psi_k\rangle\overline{c_{k,\lambda}}d\lambda=0$, 
for $f\in \mc S^\Lambda(\msc A)$. Thus, range $T_{\mc E^\Lambda(\msc A')}\ss$ range $ T_{\mc E^\Lambda(\msc A)}$. Conversely, we assume,   range $T_{\mc E^\Lambda(\msc A')}\ss$ range $ T_{\mc E^\Lambda(\msc A)}$.  For   range $T_{{\msc A'}(\alpha)}\ss \ \mbox{range} \ T_{{\msc A}(\alpha)}$, a.e. $\alpha\in\mathbb T^r$, we can proceed with the help of  (\ref{eq:dual2i})  and choosing   $ p_{\lambda_1}^k(\alpha)=c_{k, \lambda_1}$ for all $\alpha \in \mathbb T^r$. \end{proof}

\section{ Reproducing formulas by the action of     discrete translations}\label{S:Rieszbasis}

In this section  we assume  a discrete  subset $\Lambda_1\ss\exp\mathbb RX_{r+1}\dots \exp \mathbb RX_{n}$  and the  integer lattice $\Lambda_0=\exp \mathbb ZX_1\dots\exp \mathbb ZX_r $ in $G$.  Our aim is to obtain results related to reproducing formula of a Bessel family  in terms of the bracket map for a  $\Lambda$-translation generated system having   biorthogonal property, where $\Lambda=\Lambda_1\Lambda_0$. In the sequel we use the operator $[\cdot, \cdot]: L^2(G) \times L^2(G) \ra  L^1(\mathbb T^r)$, known as   \textit{bracket map}, defined as follows for a.e. $\alpha \in \mathbb T^r, \varphi,\psi \in L^2(G)$:
 $$[\varphi, \psi](\alpha) :=\langle \msc F\varphi(\alpha), \msc F\psi(\alpha)\rangle_{\Ell}=\sum_{m \in \mathbb Z^r}\langle \mc F\varphi(\alpha+m),\mc F\psi(\alpha+m)\rangle |{\bf Pf}(\alpha +m)|,  $$
 to address the results related to the reproducing formulas \cite{bar2014bracket}.
  Recall the   $\Lambda$-translation generated system  $\mc E^{\Lambda} (\varphi)=\{L_\lambda \varphi :    \lambda \in  \Lambda\} $ and its associated    $\Lambda$-translation invariant  space  $\mc S^{\Lambda} (\varphi)=\ol{\mbox{span}} \ \mc E^{\Lambda}(\varphi)$  in $L^2 (G)$ from  (\ref{TIsystem}), where $\varphi \in L^2 (G)$. Then for the reproducing formulas associated to the  system $\mc E^{\Lambda} (\varphi)$, we proceed by considering biorthogonal systems generated by the  discrete translations.   $\mc E^\Lambda(\varphi)$ and $\mc E^\Lambda(\psi)$ are  \textit{biorthogonal} if    $\langle \varphi, L_\lambda  \psi\rangle=\delta_{\lambda, 0}$ for all $\lambda\in \Lambda$.  We obtain  a necessary and sufficient  condition  for the biorthogonality and orthogonality of translation generated systems   in terms of the bracket map.  
 \begin{prop}\label{P:bior}
 	Let $\varphi,\psi\in L^2(G)$ be non-zero functions. Then    the following hold:
 	\begin{enumerate}
 		\item[(i)] $\mc E^\Lambda(\varphi)$ and  $\mc E^\Lambda(\psi)$ are     biorthogonal if and only if   
 		$[\varphi,  L_{\lambda_1}\psi](\alpha)=\delta_{\lambda_1,0},  \   \mbox{for all} \ \lambda_1\in \Lambda_1, \  \mbox{a.e.} \  \alpha\in \mathbb T^r.$
 		
 		\item [(ii)]  The subspace generated by $\mc E^\Lambda(\varphi)$ is  orthogonal  to  $\mc E^\Lambda(\psi)$   if and only if 
 		\begin{equation*}
 			[\varphi, L_{\lambda_1}\psi](\alpha)=0, \  \mbox{for all} \ \lambda_1\in \Lambda_1,  \ \mbox{ a.e.} \  \alpha\in \Omega_{\varphi}:=\{\alpha \in \mathbb T^r: [\varphi,\varphi](\alpha)\neq 0\}.
 		\end{equation*}
 		In particular,  $\mc E^\Lambda(\varphi)$ is an orthogonal system of functions if and only if the orthogonality condition  
 		\begin{equation}\label{OGC}
 			\mathcal{O}_\varphi: \quad 	[\varphi, L_{\lambda_1}\varphi](\alpha)=0,  \  \mbox{for all} \ \lambda_1\in \Lambda_1\backslash \{0\}, \  \mbox{ a.e.} \  \alpha\in \Omega_{\varphi}, \quad \mbox{holds}.  
 		\end{equation}
 		
 	\end{enumerate}
 \end{prop}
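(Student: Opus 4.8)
The plan is to reduce both statements to the periodized fiberization identity and exploit the intertwining property \eqref{eq:intertwining} together with the fact that $\msc F$ is a unitary map onto $L^2(\mathbb T^r;\ell^2(\mathbb Z^r,\HS(L^2(\mathbb R^d))))$. First I would rewrite the inner product $\langle \varphi, L_{\lambda_1\lambda_0}\psi\rangle$ in the transform domain: by unitarity of $\msc F$ this equals $\int_{\mathbb T^r}\langle \msc F\varphi(\alpha), \msc F(L_{\lambda_1\lambda_0}\psi)(\alpha)\rangle\, d\alpha$, and then \eqref{eq:intertwining} turns $\msc F(L_{\lambda_1\lambda_0}\psi)(\alpha)$ into $e^{2\pi i\langle\alpha,\lambda_0\rangle}\tilde\pi_\alpha(\lambda_1)\msc F\psi(\alpha)$. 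Hence
\[
\langle \varphi, L_{\lambda_1\lambda_0}\psi\rangle=\int_{\mathbb T^r} e^{-2\pi i\langle\alpha,\lambda_0\rangle}\,\langle \msc F\varphi(\alpha),\tilde\pi_\alpha(\lambda_1)\msc F\psi(\alpha)\rangle\, d\alpha
=\widehat{[\varphi,L_{\lambda_1}\psi]}(\lambda_0),
\]
where I use the bracket-map definition $[\varphi,L_{\lambda_1}\psi](\alpha)=\langle\msc F\varphi(\alpha),\tilde\pi_\alpha(\lambda_1)\msc F\psi(\alpha)\rangle$ (note $\msc F(L_{\lambda_1}\psi)(\alpha)=\tilde\pi_\alpha(\lambda_1)\msc F\psi(\alpha)$ since $\lambda_1$ contributes no $\lambda_0$-phase). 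The key observation is that biorthogonality $\langle\varphi,L_{\lambda_1\lambda_0}\psi\rangle=\delta_{(\lambda_1\lambda_0),0}=\delta_{\lambda_1,0}\delta_{\lambda_0,0}$ for all $\lambda_0\in\Lambda_0$ is exactly the statement that, for each fixed $\lambda_1$, all $\Lambda_0$-Fourier coefficients of the $\mathbb Z^r$-periodic $L^1$ function $[\varphi,L_{\lambda_1}\psi]$ equal $\delta_{\lambda_1,0}\delta_{\lambda_0,0}$.

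For part (i), since $\Lambda_0=\exp\mathbb ZX_1\cdots\exp\mathbb ZX_r$ is identified with $\mathbb Z^r$ and $\{e^{2\pi i\langle\alpha,\lambda_0\rangle}\}_{\lambda_0\in\Lambda_0}$ is a complete orthonormal system in $L^2(\mathbb T^r)$, a function in $L^1(\mathbb T^r)$ whose Fourier coefficients are all zero vanishes a.e.; a function whose coefficients are $\delta_{\lambda_0,0}$ equals the constant $1$ a.e. Therefore $\widehat{[\varphi,L_{\lambda_1}\psi]}(\lambda_0)=\delta_{\lambda_1,0}\delta_{\lambda_0,0}$ for all $\lambda_0$ is equivalent to $[\varphi,L_{\lambda_1}\psi](\alpha)=\delta_{\lambda_1,0}$ for a.e.\ $\alpha$, giving the forward and reverse implications simultaneously. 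This is the uniqueness-of-Fourier-coefficients argument, completely routine once the transform identity is in place.

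For part (ii), orthogonality of $\mc S^\Lambda(\varphi)$ to $\mc E^\Lambda(\psi)$ means $\langle L_{\mu}\varphi, L_{\lambda}\psi\rangle=0$ for all $\mu,\lambda\in\Lambda$; using that $L_\mu$ is unitary and the intertwining property this reduces (after absorbing the $\mu$-translation) to $[\varphi,L_{\lambda_1}\psi]$ having vanishing Fourier coefficients, hence $[\varphi,L_{\lambda_1}\psi](\alpha)=0$ for a.e.\ $\alpha$ — but one must be careful that the constraint only bites on the set where $\msc F\varphi(\alpha)\neq 0$, i.e.\ on $\Omega_\varphi=\{[\varphi,\varphi](\alpha)\neq 0\}$, since off $\Omega_\varphi$ one has $\msc F\varphi(\alpha)=0$ in $\ell^2(\mathbb Z^r,\HS(L^2(\mathbb R^d)))$ and the bracket is automatically zero there. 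Concretely I would show $\mc S^\Lambda(\varphi)$ is the set of $f$ with $\msc Ff(\alpha)\in\overline{\Span}\{\tilde\pi_\alpha(\lambda_1)\msc F\varphi(\alpha):\lambda_1\in\Lambda_1\}$ a.e.\ (this is the range-function description already used for $J_{\msc A}$ in Section \ref{S:non-discrete}), so orthogonality to $\mc E^\Lambda(\psi)$ is equivalent to $\tilde\pi_\alpha(\mu_1)\msc F\varphi(\alpha)\perp\tilde\pi_\alpha(\lambda_1)\msc F\psi(\alpha)$ for all $\mu_1,\lambda_1$ and a.e.\ $\alpha$, which via unitarity of $\tilde\pi_\alpha(\mu_1)$ collapses to $[\varphi,L_{\lambda_1'}\psi](\alpha)=0$ for all $\lambda_1'$ on $\Omega_\varphi$. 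The particular case $\mc O_\varphi$ follows by taking $\psi=\varphi$ and noting that the $\lambda_1=0$ bracket $[\varphi,\varphi](\alpha)$ is nonzero precisely on $\Omega_\varphi$, so only the $\lambda_1\in\Lambda_1\setminus\{0\}$ conditions are nontrivial.

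\textbf{Main obstacle.} The only genuinely delicate point is the correct bookkeeping in part (ii): pinning down that the orthogonality condition is vacuous off $\Omega_\varphi$ and that the relevant restriction of the bracket to $\Omega_\varphi$ captures exactly $\mc S^\Lambda(\varphi)\perp\mc E^\Lambda(\psi)$ rather than a strictly stronger or weaker statement. This requires the range-function characterization of $\mc S^\Lambda(\varphi)$ and care that $\msc F\varphi(\alpha)=0$ a.e.\ on the complement of $\Omega_\varphi$, i.e.\ that $[\varphi,\varphi](\alpha)=\|\msc F\varphi(\alpha)\|^2$. Everything else is the standard ``a periodic $L^1$ function is determined by its Fourier coefficients'' argument applied fiberwise.
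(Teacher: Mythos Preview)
Your approach is correct and is precisely the standard bracket-map technique that the paper has in mind: the paper does not actually prove Proposition~\ref{P:bior} but simply remarks that ``the proof \ldots\ can be realized on the same technique followed in \cite{arati2019orthonormality,bar2014bracket} for the Heisenberg group,'' and your fiberization identity $\langle \varphi, L_{\lambda_1\lambda_0}\psi\rangle=\widehat{[\varphi,L_{\lambda_1}\psi]}(\lambda_0)$ together with uniqueness of Fourier coefficients on $\mathbb T^r$ is exactly that technique, lifted to the $SI/Z$ setting via $\msc F$ and \eqref{eq:intertwining}. One minor simplification for part~(ii): you do not really need the range-function description of $\mc S^\Lambda(\varphi)$, since orthogonality of the closed span to $\mc E^\Lambda(\psi)$ is equivalent to orthogonality of the generating system $\mc E^\Lambda(\varphi)$ to $\mc E^\Lambda(\psi)$, and then the same Fourier-coefficient argument as in~(i) gives $[\varphi,L_{\lambda_1}\psi]=0$ a.e.\ on $\mathbb T^r$; the restriction to $\Omega_\varphi$ is indeed cosmetic because, as you note, $[\varphi,\varphi](\alpha)=\|\msc F\varphi(\alpha)\|^2$ forces the bracket to vanish off $\Omega_\varphi$ anyway.
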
 
The proof of  Proposition \ref{P:bior}   can be realized on the same technique followed in   \cite{arati2019orthonormality, bar2014bracket} for the Heisenberg group.   In the wake of Proposition \ref{P:bior},  we observe that the biorthogonality (or, orthogonality) of $\Lambda$-translation generated systems is equivalent to the corresponding biorthogonality (or, orthogonality)  of $\Lambda_0$-translation generated systems.

 Now we state our main result for a $\Lambda$-translation generated  system in $L^2 (G)$ to form reproducing formula. Unlike the case of the Euclidean setup, we observe that a necessary condition is involved related to the orthogonality of $\Lambda$-translation generated system of functions. 
  
 \begin{thm}\label{T:Bio1}	Let $\varphi, \psi \in L^2(G)$ be  two functions  such that it satisfies the orthogonality  conditions $\mathcal O_\varphi$ and    	$\mathcal O_\psi$ mentioned in (\ref{OGC}).  If $\mc E^\Lambda(\psi)$ is biorthogonal to $\mc E^\Lambda(\varphi)$, the  following reproducing  formula holds true:
 	$$f=\sum_{  \lambda \in \Lambda}\langle f, L_{\lambda}\psi\rangle L_{\lambda}\varphi, \ \mbox{for all} \  f\in \mc S^\Lambda(\varphi).$$  
 \end{thm}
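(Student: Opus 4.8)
The plan is to transport the reproducing formula from $L^2(G)$ to the fiber space $L^2(\mathbb{T}^r;\ell^2(\mathbb{Z}^r,\mathcal{B}_2(L^2(\mathbb{R}^d))))$ via the unitary $\msc F$ of \eqref{T:fiberization}, reduce it to a fiberwise statement involving the bracket map, and then invoke the biorthogonality characterization of Proposition \ref{P:bior}(i) together with the orthogonality conditions $\mathcal{O}_\varphi$, $\mathcal{O}_\psi$. Since $\Lambda_1$ is now \emph{discrete}, the set $\Lambda = \Lambda_1\Lambda_0$ is a genuine lattice and the system $\mc E^\Lambda(\varphi)$ carries counting measure, so Proposition \ref{P: Bessel-charecteriazation} (reading the $\Lambda_1$-integral as a sum) applies once we check the relevant Bessel property, which follows from $\mathcal{O}_\varphi$: indeed the orthogonality condition $\mathcal{O}_\varphi$ makes $\{L_\lambda\varphi\}_{\lambda\in\Lambda}$ an orthogonal set, hence automatically a Bessel sequence in $\mc S^\Lambda(\varphi)$ (similarly for $\psi$).

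First I would fix $f\in\mc S^\Lambda(\varphi)$ and an arbitrary $g\in\mc S^\Lambda(\varphi)$, and use Proposition \ref{P: Bessel-charecteriazation} in the discrete form to write
$$\sum_{\lambda\in\Lambda}\langle f,L_\lambda\psi\rangle\langle L_\lambda\varphi,g\rangle = \sum_{\lambda_1\in\Lambda_1}\int_{\mathbb{T}^r} \overline{G_{\lambda_1}(\alpha)}\,H_{\lambda_1}(\alpha)\,d\alpha,$$
where $G_{\lambda_1}(\alpha)=\langle\msc Ff(\alpha),\tilde\pi_\alpha(\lambda_1)\msc F\psi(\alpha)\rangle$ and $H_{\lambda_1}(\alpha)=\langle\msc Fg(\alpha),\tilde\pi_\alpha(\lambda_1)\msc F\varphi(\alpha)\rangle$ (the single-generator versions of the quantities in the previous section). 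Next I would argue that because $\mc E^\Lambda(\varphi)$ is orthogonal and $\msc Ff(\alpha)\in J_\varphi(\alpha):=\overline{\Span}\{\tilde\pi_\alpha(\lambda_1)\msc F\varphi(\alpha):\lambda_1\in\Lambda_1\}$ for a.e.\ $\alpha$, the vectors $\{\tilde\pi_\alpha(\lambda_1)\msc F\varphi(\alpha)\}_{\lambda_1\in\Lambda_1}$ form an orthogonal basis of $J_\varphi(\alpha)$ with common norm $[\varphi,\varphi](\alpha)$ on $\Omega_\varphi$; expanding $\msc Ff(\alpha)$ in this basis and using the biorthogonality relation $[\varphi,L_{\lambda_1}\psi](\alpha)=\delta_{\lambda_1,0}$ from Proposition \ref{P:bior}(i) (which also gives $[\psi,\psi](\alpha)=1$ on $\Omega_\varphi$, so $\Omega_\psi\supseteq\Omega_\varphi$), I can identify the fiberwise sum $\sum_{\lambda_1\in\Lambda_1}\overline{\langle\msc Ff(\alpha),\tilde\pi_\alpha(\lambda_1)\msc F\psi(\alpha)\rangle}\,\tilde\pi_\alpha(\lambda_1)\msc F\varphi(\alpha)$ with $\msc Ff(\alpha)$ itself for a.e.\ $\alpha$. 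Integrating against $\msc Fg(\alpha)$ and unwinding gives $\sum_{\lambda\in\Lambda}\langle f,L_\lambda\psi\rangle\langle L_\lambda\varphi,g\rangle=\langle f,g\rangle$ for all $g\in\mc S^\Lambda(\varphi)$, and since both sides define bounded conjugate-linear functionals of $g$, the weak reproducing formula upgrades to the stated norm-convergent identity $f=\sum_{\lambda\in\Lambda}\langle f,L_\lambda\psi\rangle L_\lambda\varphi$.

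The main obstacle I anticipate is the fiberwise step: making rigorous that $\{\tilde\pi_\alpha(\lambda_1)\msc F\varphi(\alpha)\}_{\lambda_1\in\Lambda_1}$ is, for a.e.\ $\alpha\in\Omega_\varphi$, an orthogonal \emph{basis} (not merely an orthogonal set) for the closed span $J_\varphi(\alpha)$, and controlling the set where $[\varphi,\varphi](\alpha)=0$. On $\mathbb{T}^r\setminus\Omega_\varphi$ one has $\msc F\varphi(\alpha)=0$, hence $J_\varphi(\alpha)=\{0\}$ and also $\msc Ff(\alpha)=0$ there for $f\in\mc S^\Lambda(\varphi)$, so that region contributes nothing; this needs a clean justification using the range-function description of $\mc S^\Lambda(\varphi)$ (as in \cite{currey2014characterization, bar2014bracket}). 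A secondary point is the interchange of the sum over $\lambda_1$ with the $\mathbb{T}^r$-integral and with the inner product against $\msc Fg(\alpha)$, which is justified by the $\ell^2$-membership of the Fourier coefficients established in the previous section's Propositions (now read for a single generator) and by dominated convergence. Once these measure-theoretic bookkeeping issues are handled, the algebra collapses exactly as in the Heisenberg-group treatment of \cite{bar2014bracket}.
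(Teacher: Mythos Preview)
Your approach is correct in outline but takes a genuinely different route from the paper. The paper's proof of Theorem~\ref{T:Bio1} never touches the fiberization $\msc F$ or Proposition~\ref{P: Bessel-charecteriazation}. Instead it uses the orthogonal decomposition $\mc S^\Lambda(\varphi)=\bigoplus_{\lambda_1\in\Lambda_1}\mc S^{\Lambda_0}(L_{\lambda_1}\varphi)$ (Proposition~\ref{L:decomposition}) and the transfer principle of Proposition~\ref{OGCreproducing}, which says that under $\mathcal O_\varphi,\mathcal O_\psi$ the $\Lambda$-reproducing formula is equivalent to the $\Lambda_0$-reproducing formula. The latter is then immediate: biorthogonality restricted to $\Lambda_0$ gives $f=\sum_{\lambda_0\in\Lambda_0}\langle f,L_{\lambda_0}\psi\rangle L_{\lambda_0}\varphi$ on finite linear combinations, and a density/continuity argument finishes. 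So the paper localizes to the \emph{abelian} lattice $\Lambda_0$ rather than to the fibers over $\mathbb T^r$.

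Your fiberwise argument also works, and in fact the ``main obstacle'' you flag is not one: $J_\varphi(\alpha)$ is \emph{defined} as the closed span of $\{\tilde\pi_\alpha(\lambda_1)\msc F\varphi(\alpha)\}_{\lambda_1}$, so an orthogonal set there is automatically an orthogonal basis. The genuine content of your approach is that fiberwise biorthogonality (from Proposition~\ref{P:bior}(i)) combined with this orthogonal basis forces $P_{J_\varphi(\alpha)}\bigl(\tilde\pi_\alpha(\lambda_1)\msc F\psi(\alpha)\bigr)=\tilde\pi_\alpha(\lambda_1)\msc F\varphi(\alpha)/[\varphi,\varphi](\alpha)$, whence the expansion coefficients match. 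One small slip: biorthogonality gives $[\varphi,\psi](\alpha)=1$ a.e., and Cauchy--Schwarz then yields $[\psi,\psi](\alpha)\geq 1/[\varphi,\varphi](\alpha)$ on $\Omega_\varphi$, not $[\psi,\psi](\alpha)=1$ as you wrote; this does not affect your argument, since you never actually use that value. What your route buys is a direct link to the range-function framework of Section~\ref{S:non-discrete}; what the paper's route buys is a two-line proof with no measure-theoretic bookkeeping.
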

Next, we discuss reproducing formula for a $\Lambda$-translation generated system, and we find an easily verifiable condition to satisfy the reproducing formula.

\begin{thm}\label{dual-bracket}  
	Let  $\varphi, \psi \in  L^2(G)$  be such that    $\varphi$ and $\psi$ satisfy  the orthogonality  conditions $\mathcal O_\varphi$ and    	$\mathcal O_\psi$ mentioned in (\ref{OGC}), respectively, then following are equivalent:
	
	\quad \quad  (i) \ $f=\sum_{  \lambda \in \Lambda}\langle f, L_{\lambda}\psi\rangle L_{\lambda}\varphi$,  for all $f\in \mc S^\Lambda(\varphi).$ \qquad
	(ii) \ $[  \varphi,   \psi](\alpha)
	=1,  \  a.e.  \ \alpha \in   \Omega_{\varphi}. $
\end{thm}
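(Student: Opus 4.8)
The plan is to reduce Theorem~\ref{dual-bracket} to a statement on the fibers via the fiberization map $\msc F$ and the bracket map, then exploit the biorthogonality machinery of Proposition~\ref{P:bior} together with Theorem~\ref{T:Bio1}. First I would observe that since $\varphi$ satisfies $\mathcal O_\varphi$, the system $\mc E^\Lambda(\varphi)$ is orthogonal, so after normalizing on $\Omega_\varphi$ the functions $\{L_\lambda\varphi\}_{\lambda\in\Lambda}$ form an orthogonal basis of $\mc S^\Lambda(\varphi)$; hence every $f\in\mc S^\Lambda(\varphi)$ has the expansion $f=\sum_{\lambda\in\Lambda}\langle f,L_\lambda\varphi\rangle L_\lambda\varphi/\|L_\lambda\varphi\|^2$ wherever $\|L_\lambda\varphi\|\neq 0$, i.e.\ in terms of the bracket, $[\varphi,\varphi](\alpha)$ on $\Omega_\varphi$. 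This is where the orthogonality hypotheses on both $\varphi$ and $\psi$ become essential, just as flagged before the statement of the theorem.

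For the implication (ii)$\Rightarrow$(i): assuming $[\varphi,\psi](\alpha)=1$ a.e.\ on $\Omega_\varphi$, I would show that $\mc E^\Lambda(\psi)$ is biorthogonal to $\mc E^\Lambda(\varphi)$ and then simply invoke Theorem~\ref{T:Bio1}. To verify biorthogonality, Proposition~\ref{P:bior}(i) tells us we need $[\varphi,L_{\lambda_1}\psi](\alpha)=\delta_{\lambda_1,0}$ for all $\lambda_1\in\Lambda_1$, a.e.\ $\alpha$. The intertwining property \eqref{eq:intertwining} translates $[\varphi,L_{\lambda_1}\psi](\alpha)$ into $\langle\msc F\varphi(\alpha),\tilde\pi_\alpha(\lambda_1)\msc F\psi(\alpha)\rangle$; using the orthogonality condition $\mathcal O_\psi$ and the fact that $\msc F\varphi(\alpha)\in J_{\{\varphi\}}(\alpha)=\overline{\Span}\{\tilde\pi_\alpha(\lambda_1)\msc F\psi(\alpha)\}$ would be false in general, so instead I would argue directly: the quantity $[\varphi,L_{\lambda_1}\psi]$ is a periodic $L^1$ function whose Fourier coefficients are exactly $\langle\varphi,L_{\lambda_1\lambda_0}\psi\rangle$; on the other hand, $\mathcal O_\psi$ forces the bracket-orthogonality structure so that $[\varphi,L_{\lambda_1}\psi](\alpha)$ is supported (modulo null sets) on $\Omega_\varphi$ and there reduces to the single value $[\varphi,\psi](\alpha)=1$ at $\lambda_1=0$ and vanishes for $\lambda_1\neq 0$. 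Concretely: $[L_{\lambda_1}\psi,L_{\lambda_1}\psi]=[\psi,\psi]$ by the intertwining relation and unitarity of $\tilde\pi_\alpha(\lambda_1)$, and the Cauchy–Schwarz bound $|[\varphi,L_{\lambda_1}\psi](\alpha)|^2\le[\varphi,\varphi](\alpha)[\psi,\psi](\alpha)$ confines support to $\Omega_\varphi$; combined with $\mathcal O_\psi$ (giving that the $\tilde\pi_\alpha(\lambda_1)\msc F\psi(\alpha)$ are mutually orthogonal fiberwise) and (ii), one extracts $[\varphi,L_{\lambda_1}\psi](\alpha)=\delta_{\lambda_1,0}$ on $\Omega_\varphi$, hence everywhere relevant. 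Then Theorem~\ref{T:Bio1} delivers (i).

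For the implication (i)$\Rightarrow$(ii): assume the reproducing formula $f=\sum_{\lambda\in\Lambda}\langle f,L_\lambda\psi\rangle L_\lambda\varphi$ holds for all $f\in\mc S^\Lambda(\varphi)$. I would test it on $f=\varphi$, or more flexibly on $f=L_{\lambda_1'}\varphi$, and take the inner product with $\varphi$. Using orthogonality $\mathcal O_\varphi$ the right side collapses: $\langle\sum_\lambda\langle\varphi,L_\lambda\psi\rangle L_\lambda\varphi,\varphi\rangle=\langle\varphi,\psi\rangle[\varphi,\varphi]$-type identities. More precisely, applying $\msc F$ to (i) with $f=\varphi$ and pairing against $\msc F\varphi(\alpha)$ fiberwise, together with the periodization and the orthogonality of $\{\tilde\pi_\alpha(\lambda_1)\msc F\varphi(\alpha)\}$ from $\mathcal O_\varphi$, yields $[\varphi,\varphi](\alpha)=[\varphi,\psi](\alpha)\,[\varphi,\varphi](\alpha)$ a.e.\ on $\mathbb T^r$; dividing by $[\varphi,\varphi](\alpha)\neq 0$ on $\Omega_\varphi$ gives $[\varphi,\psi](\alpha)=1$ there. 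To make the fiberwise pairing rigorous I would use that $\msc Ff(\alpha)\in J_{\{\varphi\}}(\alpha)$ for $f\in\mc S^\Lambda(\varphi)$, together with the density argument over $\{P_{J_{\{\varphi\}}}(\alpha)x_n\}$ exactly as in the proof of Theorem~\ref{T:dualnondiscrete}, to pass from the global identity to the a.e.-in-$\alpha$ identity.

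The main obstacle I anticipate is the careful bookkeeping of the fiberwise orthogonality: the conditions $\mathcal O_\varphi$ and $\mathcal O_\psi$ say that on $\Omega_\varphi$ (resp.\ $\Omega_\psi$) the vectors $\{\tilde\pi_\alpha(\lambda_1)\msc F\varphi(\alpha):\lambda_1\in\Lambda_1\}$ are pairwise orthogonal, but one must be attentive that $\msc F\psi(\alpha)$ need not lie in $J_{\{\varphi\}}(\alpha)$, so the biorthogonality in (ii)$\Rightarrow$(i) has to be derived through the bracket/Cauchy–Schwarz support argument rather than by a naive projection, and the interplay of the two (possibly different) support sets $\Omega_\varphi$ and $\Omega_\psi$ must be handled so that the equality $[\varphi,L_{\lambda_1}\psi]=\delta_{\lambda_1,0}$ holds on the correct set. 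Everything else is a routine transfer along $\msc F$ using \eqref{eq:intertwining}, the definition of the bracket map, and the already-established Propositions~\ref{P:bior},~\ref{L:dualtype2} and Theorem~\ref{T:Bio1}.
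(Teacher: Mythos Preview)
Your plan for the direction (i)$\Rightarrow$(ii) is sound and close to the paper's argument: testing the reproducing formula on $f=\varphi$, passing to fibers via $\msc F$, and using $\mathcal O_\varphi$ to collapse the $\Lambda_1$-sum indeed yields $\msc F\varphi(\alpha)\bigl(1-[\varphi,\psi](\alpha)\bigr)=0$, hence $[\varphi,\psi](\alpha)=1$ on $\Omega_\varphi$. The paper routes this through Proposition~\ref{OGCreproducing} (reducing first to the $\Lambda_0$-formula) and then performs the same fiber computation; your direct use of $\mathcal O_\varphi$ to kill the $\lambda_1\neq 0$ terms achieves the same end.

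The direction (ii)$\Rightarrow$(i), however, has a genuine gap. You propose to deduce biorthogonality $[\varphi,L_{\lambda_1}\psi](\alpha)=\delta_{\lambda_1,0}$ from (ii) together with $\mathcal O_\varphi,\mathcal O_\psi$, and then invoke Theorem~\ref{T:Bio1}. But biorthogonality is strictly stronger than (ii), and neither $\mathcal O_\varphi$ nor $\mathcal O_\psi$ controls the cross bracket $[\varphi,L_{\lambda_1}\psi]$ for $\lambda_1\neq 0$. A fiberwise counterexample: take orthonormal $e_1,e_2$ in the fiber space, let $\tilde\pi_\alpha(\lambda_1)$ swap $e_1\leftrightarrow e_2$, and set $\msc F\varphi(\alpha)=e_1$, $\msc F\psi(\alpha)=e_1+ie_2$. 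Then $[\varphi,\varphi](\alpha)=1$, $[\psi,\psi](\alpha)=2$, $\mathcal O_\varphi$ and $\mathcal O_\psi$ hold, and $[\varphi,\psi](\alpha)=1$; yet $[\varphi,L_{\lambda_1}\psi](\alpha)=\langle e_1,e_2+ie_1\rangle=-i\neq 0$. So your extraction of biorthogonality from ``the $\tilde\pi_\alpha(\lambda_1)\msc F\psi(\alpha)$ are mutually orthogonal'' fails: mutual orthogonality of that system says nothing about the coefficients of $\msc F\varphi(\alpha)$ against each of its members.

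The paper avoids this entirely. It first invokes Proposition~\ref{OGCreproducing} to reduce (i) to the $\Lambda_0$-statement $f=\sum_{\lambda_0\in\Lambda_0}\langle f,L_{\lambda_0}\psi\rangle L_{\lambda_0}\varphi$ for $f\in\mc S^{\Lambda_0}(\varphi)$, and then verifies this directly on generators $f=L_\eta\varphi$ by the fiber computation
\[
\msc F\Bigl(\sum_{\lambda_0}\langle L_\eta\varphi,L_{\lambda_0}\psi\rangle L_{\lambda_0}\varphi\Bigr)(\alpha)=[L_\eta\varphi,\psi](\alpha)\,\msc F\varphi(\alpha)=e^{2\pi i\langle\alpha,\eta\rangle}[\varphi,\psi](\alpha)\,\msc F\varphi(\alpha)=\msc F(L_\eta\varphi)(\alpha),
\]
using only $[\varphi,\psi]=1$ on $\Omega_\varphi$ and $\msc F\varphi(\alpha)=0$ off $\Omega_\varphi$. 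No biorthogonality is needed; the reduction to $\Lambda_0$ is precisely what lets the single bracket condition (ii) suffice.
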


As a consequence of Theorem \ref{dual-bracket} for the Heisenberg group $\mathbb H^d$, next we discuss a reproducing formula associated with the orthonormal Gabor systems of $L^2(\mathbb R^d)$. 
For $y\in \mathbb R^*$, we  define functions $v_y$ and $w_y$, from \cite{bar2014bracket},  such that $v_y(x)=|y|^{d/2}v(yx)$ and $w_y(x)=|y|^{d/2}w(yx)$,  $x \in \mathbb R^d$, where $v,w\in L^2(\mathbb R^d)$  with  $\|v\|=1,\|w\|=1$.   Corresponding to $v_y$ and $w_y$, we consider the rank one projection operators $\mc P_y$ and $\mc Q_y$ defined as follows:
 $$\mc P_y=v_y\otimes v_y: L^2(\mathbb R^d)\ra L^2(\mathbb R^d) \
 \mbox{ by,}\  f\ra \langle f, v_y\rangle v_y \ \mbox{and} \ \mc Q_y=w_y\otimes w_y: L^2(\mathbb R^d)\ra L^2(\mathbb R^d)\ \mbox{by},\ f\ra \langle f, w_y\rangle w_y.
  $$
 Next for all $t\in (0,1)$,  we define
 $$\msc H_t(y)=\begin{cases}
 \mc P_y\quad  \mbox{for} \ y \in (t, 1]\\
 0\ \quad \  \mbox{otherwise}
 \end{cases}\hspace{2cm}
 \mbox{and}
 \hspace{2cm}
  \msc G_t(y)=\begin{cases}
 \mc Q_y \quad  \mbox{for} \ y \in (t, 1]\\
 0 \ \ \quad \mbox{otherwise}.
 \end{cases}$$ 
  Then  $\msc H_t,  \msc G_t\in L^2(\mathbb R^*, \HS(L^2(\mathbb R^d))|\lambda|^d d\lambda)$  since  
  $\|\msc H_{t}(y)\|_{\HS}=\begin{cases}
 1 \  \quad  \mbox{for} \ y \in (t, 1],\\
 0 \ \ \quad  \mbox{otherwise}.
 \end{cases}=\|\msc G_{t}(y)\|_{\HS}$,  
 and hence for each $t\in (0,1)$,  $\varphi_t,  \psi_t \in L^2(\mathbb H^d)$, where 
  $\varphi_t = \mathcal F^{-1}\msc H_t \  \mbox{and} \   \psi_t= \mathcal F^{-1}\msc G_t.$

\begin{thm}\label{Application}  Let $A, B\in GL(d,\mathbb R)$ such that  $AB^t\in \mathbb Z$. 	For  $ 0<\alpha\leq 1$, let the Gabor systems $$
	\{\pi_{\alpha}(m, n)v_{\alpha}: (m, n)\in A\mathbb Z^d\times B\mathbb Z^d\}, \ \mbox{and} \ \{\pi_{\alpha}(m, n)w_{\alpha}:(m, n)\in A\mathbb Z^d\times B\mathbb Z^d\}
	$$ 
	be  orthonormal   in $L^2(\mathbb R^d)$. Then  for each   $0<t< \alpha\leq 1$,  the systems $\mc E^\Lambda(\varphi_t)$ and $\mc E^\Lambda(\psi_t)$ satisfy the reproducing formula
	$$
	f=\sum_{  \lambda \in \Lambda}\langle f, L_{\lambda}\psi_t\rangle L_{\lambda}\varphi_t,  \ \forall f\in   \ \mc S^\Lambda(\varphi_t)\
	 \mbox{if and only if}  \quad  |\langle v_{\alpha}, w_{\alpha}\rangle_{L^2(\mathbb R^d)}|=\frac{1}{\alpha^{d/2}},
	 $$
	 where $\Lambda=A\mathbb Z^d\times B\mathbb Z^d \times \mathbb Z$.
\end{thm}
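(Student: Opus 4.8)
The strategy is to reduce Theorem~\ref{Application} to Theorem~\ref{dual-bracket} applied to the functions $\varphi_t$ and $\psi_t$ on $\mathbb{H}^d$, so that the reproducing formula becomes equivalent to the bracket identity $[\varphi_t,\psi_t](\alpha)=1$ for a.e. $\alpha\in\Omega_{\varphi_t}$. The first task is to identify $\Omega_{\varphi_t}$ explicitly. By the fiberization map for $\mathbb{H}^d$ recorded in the Remark after Theorem~\ref{T:type I}, $\msc F\varphi_t(\alpha)(m)=|\alpha+m|^{d/2}\msc H_t(\alpha+m)$, and since $\msc H_t(y)$ is the rank-one projection $\mc P_y=v_y\otimes v_y$ only for $y\in(t,1]$ and zero otherwise, the bracket $[\varphi_t,\varphi_t](\alpha)=\sum_{m\in\mathbb{Z}}|\alpha+m|^{d}\|\msc H_t(\alpha+m)\|_{\HS}^2$ picks up exactly those $m$ for which $\alpha+m\in(t,1]$. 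For $0<t<\alpha\le 1$ this happens precisely at $m=0$ when $\alpha\in(t,1]$, so $\Omega_{\varphi_t}=(t,1]$ (mod the integer lattice), and likewise $\Omega_{\psi_t}=(t,1]$.

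**Verifying the hypotheses of Theorem~\ref{dual-bracket}.** Next I would check the orthogonality conditions $\mathcal O_{\varphi_t}$ and $\mathcal O_{\psi_t}$ of~(\ref{OGC}). By Proposition~\ref{P:bior}(ii) this amounts to showing $[\varphi_t,L_{\lambda_1}\varphi_t](\alpha)=0$ for all $\lambda_1\in\Lambda_1\setminus\{0\}$ and a.e.\ $\alpha\in\Omega_{\varphi_t}$, where $\Lambda_1=A\mathbb{Z}^d\times B\mathbb{Z}^d\times\{0\}$. Using the intertwining relation~(\ref{eq:intertwining}) and the explicit Schr\"odinger action, $\msc F(L_{\lambda_1}\varphi_t)(\alpha)(m)=\pi_{\alpha+m}(\lambda_1)\circ\msc H_t(\alpha+m)$, and for $\alpha\in(t,1]$ only the $m=0$ term survives, so $[\varphi_t,L_{\lambda_1}\varphi_t](\alpha)=|\alpha|^{d}\,\mathrm{tr}\big((\pi_\alpha(\lambda_1)\mc P_\alpha)^*\mc P_\alpha\big)=|\alpha|^{d}\langle \pi_\alpha(\lambda_1)v_\alpha,v_\alpha\rangle\langle v_\alpha,v_\alpha\rangle$ up to conjugation. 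Since $\{\pi_\alpha(m,n)v_\alpha\}$ is orthonormal over $A\mathbb{Z}^d\times B\mathbb{Z}^d$, this inner product vanishes for $\lambda_1\neq 0$ (here the hypothesis $AB^t\in\mathbb{Z}$ guarantees the relevant cocycle factors are trivial so that orthonormality of the Gabor system is exactly the statement needed). Hence $\mathcal O_{\varphi_t}$ holds, and symmetrically $\mathcal O_{\psi_t}$.

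**The bracket identity.** With the hypotheses of Theorem~\ref{dual-bracket} in place, the reproducing formula $f=\sum_{\lambda\in\Lambda}\langle f,L_\lambda\psi_t\rangle L_\lambda\varphi_t$ on $\mc S^\Lambda(\varphi_t)$ is equivalent to $[\varphi_t,\psi_t](\alpha)=1$ for a.e.\ $\alpha\in\Omega_{\varphi_t}=(t,1]$. Computing as above, for $\alpha\in(t,1]$ only $m=0$ contributes, giving
\begin{equation*}
[\varphi_t,\psi_t](\alpha)=|\alpha|^{d}\,\mathrm{tr}\big(\mc Q_\alpha^*\mc P_\alpha\big)=\alpha^{d}\,\langle v_\alpha,w_\alpha\rangle\langle w_\alpha,v_\alpha\rangle=\alpha^{d}\,|\langle v_\alpha,w_\alpha\rangle_{L^2(\mathbb{R}^d)}|^{2}.
\end{equation*}
Since $v_\alpha(x)=|\alpha|^{d/2}v(\alpha x)$ and $w_\alpha(x)=|\alpha|^{d/2}w(\alpha x)$ are fixed dilates not depending on $t$, this equals $1$ for a.e.\ $\alpha\in(t,1]$ if and only if $\alpha^{d}|\langle v_\alpha,w_\alpha\rangle|^2=1$, i.e.\ $|\langle v_\alpha,w_\alpha\rangle_{L^2(\mathbb{R}^d)}|=\alpha^{-d/2}$, which for the specific parameter $\alpha$ in the statement is the claimed condition. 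I expect the main obstacle to be the careful bookkeeping in the third step: tracking the dilation normalizations $|y|^{d/2}$ through the fiberization map and confirming that the rank-one operators $\mc P_\alpha,\mc Q_\alpha$ contribute their Hilbert--Schmidt trace correctly, together with justifying that the single index $m=0$ is the only surviving term precisely because of the constraint $0<t<\alpha\le 1$ — any sloppiness there would change the range $\Omega_{\varphi_t}$ and hence the identity one must verify.
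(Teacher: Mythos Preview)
Your proposal is correct and follows essentially the same route as the paper: verify the orthogonality conditions $\mathcal O_{\varphi_t}$, $\mathcal O_{\psi_t}$ by computing $[L_{\lambda_1}\varphi_t,\varphi_t](\alpha)=|\alpha|^d\langle\pi_\alpha(\lambda_1)v_\alpha,v_\alpha\rangle$ (and likewise for $\psi_t$) and invoking orthonormality of the Gabor systems, then compute $[\varphi_t,\psi_t](\alpha)=|\alpha|^d|\langle v_\alpha,w_\alpha\rangle|^2$ on $\Omega_{\varphi_t}=(t,1]$ and apply Theorem~\ref{dual-bracket}. The only item the paper adds that you leave implicit is a separate check that $[\varphi_t,\varphi_t](\alpha)$ is bounded (hence $\mc E^\Lambda(\varphi_t)$ is Bessel), but your computation of $\Omega_{\varphi_t}$ already contains this.
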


%

To proceed for the proof of Theorems \ref{T:Bio1}, \ref{dual-bracket}, and \ref{Application}, we first note  Proposition \ref{P:bior}   motivates  to decompose the principle translation invariant space $\mc S^\Lambda(\varphi)$ into the orthogonal direct sum of $\Lambda_1$-translates.  
\begin{prop}\label{L:decomposition}
	Let $\varphi\in L^2(G)$ be   such that   it satisfies the orthogonality condition $\mathcal O_\varphi$ mentioned in  (\ref{OGC}).  
	For $\lambda_1\neq \lambda_1' \in \Lambda_1$,    the subspace generated by $\mc E^{\Lambda_0}(L_{\lambda_1}\varphi)$ is orthogonal to $\mc E^{\Lambda_0}(L_{\lambda_1'}\varphi)$ and hence
		$\mc S^\Lambda(\varphi)=\bigoplus_{\lambda_1\in \Lambda_1}\mc S^{\Lambda_0}(L_{\lambda_1}\varphi)$.
		  Moreover, 	$f\in S^{\Lambda}(\varphi)$ if and only if 
		\begin{align*} 
		(\msc Ff)(\alpha)(m) 
		=\sum_{\lambda_1\in\Lambda_1}\mathfrak p_{\lambda_1}(\alpha)\msc FL_{\lambda_1}\varphi( \alpha)(m),  \ \mbox{for a.e.} \  \alpha \in \mathbb T^r, \ m\in \mathbb Z^r,  
		\end{align*} 
		where  $\mathfrak p=\{\mathfrak p_{\lambda_1}\}_{\lambda_1\in \Lambda_1}$ is a member of the weighted space $L^2(\mathbb T^r, [\varphi, \varphi])$. Further  
  there exist unique $\varphi_{\lambda_1},\psi_{\lambda_1} $ in $\mc S^{\Lambda_0}(L_{\lambda_1}\varphi)$ such that 
		$$
		f=\sum_{ \lambda_1 \in \Lambda_1}\varphi_{\lambda_1}, \ g=\sum_{ \lambda_1 \in \Lambda_1}\psi_{\lambda_1}, \ \mbox{and } \  \langle f, g\rangle=\sum_{ \lambda_1 \in \Lambda_1}\langle \varphi_{\lambda_1}, \psi_{\lambda_1}\rangle, \ \mbox{for}\ f, g \in \mc S^\Lambda(\varphi) .$$
		\end{prop}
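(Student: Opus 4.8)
The plan is to prove the decomposition by passing through the fiberization map $\msc F$ and reducing everything to the level of fibers over $\mathbb T^r$, where the orthogonality condition $\mathcal O_\varphi$ from (\ref{OGC}) translates into a genuine orthogonal-direct-sum statement.

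First I would establish the orthogonality of the subspaces generated by $\mc E^{\Lambda_0}(L_{\lambda_1}\varphi)$ and $\mc E^{\Lambda_0}(L_{\lambda_1'}\varphi)$ for $\lambda_1 \neq \lambda_1'$. Observe that for $\mu_0, \mu_0' \in \Lambda_0$,
\[
\langle L_{\lambda_1\mu_0}\varphi, L_{\lambda_1'\mu_0'}\varphi\rangle = \langle L_{(\lambda_1'\mu_0')^{-1}\lambda_1\mu_0}\varphi, \varphi\rangle,
\]
and since $\Lambda_0$ is central the product $(\lambda_1'\mu_0')^{-1}\lambda_1\mu_0$ has $\Lambda_1$-component $(\lambda_1')^{-1}\lambda_1 \neq 0$. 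By Proposition \ref{P:bior}(ii) applied with $\psi = \varphi$ (the orthogonality condition $\mathcal O_\varphi$ gives $[\varphi, L_{\eta_1}\varphi](\alpha) = 0$ for all $\eta_1 \in \Lambda_1\setminus\{0\}$ and a.e.\ $\alpha \in \Omega_\varphi$), together with the identity $\langle L_\eta\varphi, \varphi\rangle = \int_{\mathbb T^r}[L_{\eta_1}\varphi, \varphi](\alpha)e^{-2\pi i\langle\alpha,\eta_0\rangle}\,d\alpha$ obtained by expanding via $\msc F$ and (\ref{eq:intertwining}), one gets that all these inner products vanish. Hence $\mc S^{\Lambda_0}(L_{\lambda_1}\varphi) \perp \mc S^{\Lambda_0}(L_{\lambda_1'}\varphi)$, and since $\mc E^\Lambda(\varphi) = \bigcup_{\lambda_1\in\Lambda_1}\mc E^{\Lambda_0}(L_{\lambda_1}\varphi)$, taking closed spans yields $\mc S^\Lambda(\varphi) = \bigoplus_{\lambda_1\in\Lambda_1}\mc S^{\Lambda_0}(L_{\lambda_1}\varphi)$.

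Next I would characterize membership in $\mc S^\Lambda(\varphi)$ on the Plancherel side. Using the intertwining relation (\ref{eq:intertwining}), $\msc F L_{\lambda_1\lambda_0}\varphi(\alpha) = e^{2\pi i\langle\alpha,\lambda_0\rangle}\tilde\pi_\alpha(\lambda_1)\msc F\varphi(\alpha)$, so the closed span of the $\Lambda_0$-translates of $L_{\lambda_1}\varphi$ is fiberwise the closed span of $\{e^{2\pi i\langle\alpha,\lambda_0\rangle}\tilde\pi_\alpha(\lambda_1)\msc F\varphi(\alpha) : \lambda_0 \in \Lambda_0\}$, i.e.\ the set of fibers of the form $\mathfrak p_{\lambda_1}(\alpha)\,\msc FL_{\lambda_1}\varphi(\alpha)$ with $\mathfrak p_{\lambda_1}$ a $\mathbb Z^r$-periodic multiplier; the weighted $L^2$-norm of such an element is $\int_{\mathbb T^r}|\mathfrak p_{\lambda_1}(\alpha)|^2[\varphi,\varphi](\alpha)\,d\alpha$ because $\|\msc FL_{\lambda_1}\varphi(\alpha)\|^2_{\Ell} = [L_{\lambda_1}\varphi, L_{\lambda_1}\varphi](\alpha) = [\varphi,\varphi](\alpha)$ (unitarity of $\tilde\pi_\alpha(\lambda_1)$). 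Summing over $\lambda_1$ using the orthogonal direct sum from the first step gives $\msc Ff(\alpha)(m) = \sum_{\lambda_1\in\Lambda_1}\mathfrak p_{\lambda_1}(\alpha)\msc FL_{\lambda_1}\varphi(\alpha)(m)$ with $\mathfrak p = \{\mathfrak p_{\lambda_1}\} \in L^2(\mathbb T^r,[\varphi,\varphi])$, and the final statement about unique $\varphi_{\lambda_1},\psi_{\lambda_1} \in \mc S^{\Lambda_0}(L_{\lambda_1}\varphi)$ with $f = \sum\varphi_{\lambda_1}$, $g = \sum\psi_{\lambda_1}$, $\langle f,g\rangle = \sum\langle\varphi_{\lambda_1},\psi_{\lambda_1}\rangle$ is just the standard orthogonal-projection decomposition associated to an orthogonal direct sum.

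The main obstacle I anticipate is handling the weighted space $L^2(\mathbb T^r,[\varphi,\varphi])$ carefully when $[\varphi,\varphi](\alpha)$ vanishes on a set of positive measure: there the multipliers $\mathfrak p_{\lambda_1}$ are not uniquely determined pointwise, so one must be precise that the correspondence $\mathfrak p \mapsto f$ is well-defined and isometric modulo the null set $\{[\varphi,\varphi] = 0\}$, and that the fiber $J_\varphi(\alpha) := \overline{\Span}\{\tilde\pi_\alpha(\lambda_1)\msc F\varphi(\alpha):\lambda_1\in\Lambda_1\}$ is $\{0\}$ on that null set so nothing is lost. I would also need a measurable-selection argument to guarantee that the fiberwise decompositions $\msc Ff(\alpha) = \sum_{\lambda_1}\mathfrak p_{\lambda_1}(\alpha)\msc FL_{\lambda_1}\varphi(\alpha)$ assemble into genuinely measurable multipliers $\mathfrak p_{\lambda_1}$; this follows the standard range-function machinery already invoked for Theorems \ref{T:dualnondiscrete} and \ref{T:type I}, so I would cite that rather than redo it.
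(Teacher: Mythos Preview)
The paper does not actually prove this proposition; immediately after the statement it writes ``The above Proposition \ref{L:decomposition} can be proved easily'' and moves on. So there is no paper proof to compare against, and your proposal supplies exactly the kind of argument the authors leave to the reader. The overall strategy---use Proposition \ref{P:bior} to get mutual orthogonality of the $\Lambda_0$-cyclic pieces, deduce the orthogonal direct sum, then describe each summand fiberwise via a periodic multiplier $\mathfrak p_{\lambda_1}$ with the weighted norm $\int_{\mathbb T^r}|\mathfrak p_{\lambda_1}(\alpha)|^2[\varphi,\varphi](\alpha)\,d\alpha$---is correct, and your closing remarks about the null set $\{[\varphi,\varphi]=0\}$ and measurable selection correctly flag the only technical subtleties.

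One small gap in the write-up: you rewrite the inner product as $\langle L_{(\lambda_1')^{-1}\lambda_1\mu_0}\varphi,\varphi\rangle$ and then invoke $\mathcal O_\varphi$ at the index $\eta_1=(\lambda_1')^{-1}\lambda_1$. But in Section \ref{S:Rieszbasis} the paper only assumes $\Lambda_1$ is a discrete \emph{subset}, not a subgroup, so $(\lambda_1')^{-1}\lambda_1$ need not lie in $\Lambda_1$ and $\mathcal O_\varphi$ does not literally apply to it. The clean fix is to bypass this manipulation and appeal directly to the ``In particular'' clause of Proposition \ref{P:bior}(ii): $\mathcal O_\varphi$ is stated there to be equivalent to $\mc E^\Lambda(\varphi)$ being an orthogonal system, which immediately gives $\langle L_{\lambda_1\mu_0}\varphi, L_{\lambda_1'\mu_0'}\varphi\rangle=0$ whenever $(\lambda_1,\mu_0)\neq(\lambda_1',\mu_0')$, with no need to move indices inside $\Lambda_1$.
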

The above Proposition \ref{L:decomposition} can be proved easily.  

We now a provide a characterization of Bessel family under the orthogonality condition. The sequence $\mc E^\Lambda(\varphi)$ is called Bessel in $\mc S^\Lambda(\varphi)$  if, 	$\sum_{\lambda\in \Lambda}	|\langle f, L_{\lambda}\varphi\rangle|^2 \leq B\|f\|^2 \ , \forall f \in  \mc S^\Lambda(\varphi).$

\begin{prop}\label{Bessel}  Let $\varphi\in L^2(G)$ be   such that   it satisfies the orthogonality condition $\mathcal O_\varphi$ mentioned in  (\ref{OGC}).  Then $\mc E^\Lambda(\varphi)$ is Bessel sequence in $\mc S^\Lambda(\varphi)$. Also, 
	$\mc E^\Lambda(\varphi)$  being a Bessel sequence in $\mc S^\Lambda(\varphi)$ is equivalent to $\mc E^{\Lambda_0}(\varphi)$ being a  Bessel sequence in $\mc S^{\Lambda_0}(\varphi)$. 
	\end{prop}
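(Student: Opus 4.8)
The plan is to reduce both assertions to the corresponding statements for the central lattice $\Lambda_0$, using the orthogonal decomposition $\mc S^\Lambda(\varphi)=\bigoplus_{\lambda_1\in\Lambda_1}\mc S^{\Lambda_0}(L_{\lambda_1}\varphi)$ provided by Proposition \ref{L:decomposition} (this is where the hypothesis $\mathcal O_\varphi$ is consumed). The single structural fact driving the reduction is that $\Lambda_0$ lies in the center of $G$, so that $L_{\lambda_1}L_{\lambda_0}=L_{\lambda_0}L_{\lambda_1}$ for every $\lambda_1\in\Lambda_1$, $\lambda_0\in\Lambda_0$; consequently the unitary $L_{\lambda_1}$ of $L^2(G)$ carries $\mc E^{\Lambda_0}(\varphi)$ onto $\mc E^{\Lambda_0}(L_{\lambda_1}\varphi)$ and restricts to a surjective isometry $\mc S^{\Lambda_0}(\varphi)\to\mc S^{\Lambda_0}(L_{\lambda_1}\varphi)$, identifying each summand with $\mc S^{\Lambda_0}(\varphi)$ in a norm-preserving way.

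For the equivalence, fix $f\in\mc S^\Lambda(\varphi)$ and write $f=\sum_{\lambda_1\in\Lambda_1}\varphi_{\lambda_1}$ with $\varphi_{\lambda_1}\in\mc S^{\Lambda_0}(L_{\lambda_1}\varphi)$ and $\|f\|^2=\sum_{\lambda_1}\|\varphi_{\lambda_1}\|^2$. For $\lambda=\mu_1\mu_0\in\Lambda$ centrality gives $L_\lambda\varphi=L_{\mu_0}(L_{\mu_1}\varphi)\in\mc S^{\Lambda_0}(L_{\mu_1}\varphi)$, so the mutual orthogonality of the summands leaves only one term,
$$\langle f,L_\lambda\varphi\rangle=\langle\varphi_{\mu_1},L_{\mu_0}L_{\mu_1}\varphi\rangle=\langle L_{\mu_1}^{-1}\varphi_{\mu_1},L_{\mu_0}\varphi\rangle,$$
and, putting $g_{\mu_1}:=L_{\mu_1}^{-1}\varphi_{\mu_1}\in\mc S^{\Lambda_0}(\varphi)$ (so $\|g_{\mu_1}\|=\|\varphi_{\mu_1}\|$) and summing over $\Lambda$,
$$\sum_{\lambda\in\Lambda}|\langle f,L_\lambda\varphi\rangle|^2=\sum_{\mu_1\in\Lambda_1}\ \sum_{\mu_0\in\Lambda_0}|\langle g_{\mu_1},L_{\mu_0}\varphi\rangle|^2.$$
If $\mc E^{\Lambda_0}(\varphi)$ is Bessel in $\mc S^{\Lambda_0}(\varphi)$ with bound $B$, the inner sums are $\le B\|g_{\mu_1}\|^2$, hence the left side is $\le B\sum_{\mu_1}\|\varphi_{\mu_1}\|^2=B\|f\|^2$; so $\mc E^\Lambda(\varphi)$ is Bessel in $\mc S^\Lambda(\varphi)$ with the same bound. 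Conversely, if $\mc E^\Lambda(\varphi)$ is Bessel in $\mc S^\Lambda(\varphi)$ with bound $B$, fix any $\mu_1\in\Lambda_1$ and apply the identity to an $f$ all of whose blocks vanish except the $\mu_1$-block; as $g_{\mu_1}$ then runs over all of $\mc S^{\Lambda_0}(\varphi)$, we obtain $\sum_{\mu_0}|\langle g,L_{\mu_0}\varphi\rangle|^2\le B\|g\|^2$ for every $g\in\mc S^{\Lambda_0}(\varphi)$. Thus the two Bessel properties are equivalent, with coinciding optimal bounds.

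It remains to show $\mc E^\Lambda(\varphi)$ is Bessel in $\mc S^\Lambda(\varphi)$ at all, which by the equivalence just proved amounts to $\mc E^{\Lambda_0}(\varphi)$ being Bessel in $\mc S^{\Lambda_0}(\varphi)$; for this I pass to the fiberization $\msc F$. An element $f\in\mc S^{\Lambda_0}(\varphi)$ is characterized (the one-translate case of Proposition \ref{L:decomposition}) by $\msc Ff(\alpha)=\mathfrak p(\alpha)\msc F\varphi(\alpha)$ for a $\mathbb Z^r$-periodic $\mathfrak p$ with $\|f\|^2=\int_{\mathbb T^r}|\mathfrak p(\alpha)|^2[\varphi,\varphi](\alpha)\,d\alpha$, while $\msc F(L_{\mu_0}\varphi)(\alpha)=e^{2\pi i\langle\alpha,\mu_0\rangle}\msc F\varphi(\alpha)$ by (\ref{eq:intertwining}); hence $\langle f,L_{\mu_0}\varphi\rangle$ is the $\mu_0$-th Fourier coefficient of $\mathfrak p\,[\varphi,\varphi]\in L^1(\mathbb T^r)$, and Parseval yields
$$\sum_{\mu_0\in\Lambda_0}|\langle f,L_{\mu_0}\varphi\rangle|^2=\big\|\mathfrak p\,[\varphi,\varphi]\big\|^2_{L^2(\mathbb T^r)}\le\big\|[\varphi,\varphi]\big\|_{L^\infty(\mathbb T^r)}\int_{\mathbb T^r}|\mathfrak p|^2[\varphi,\varphi]\,d\alpha=\big\|[\varphi,\varphi]\big\|_{L^\infty(\mathbb T^r)}\,\|f\|^2.$$

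So $\mc E^{\Lambda_0}(\varphi)$ is Bessel in $\mc S^{\Lambda_0}(\varphi)$ with bound $\|[\varphi,\varphi]\|_{L^\infty(\mathbb T^r)}$, and the step I expect to require the most care is the finiteness of this quantity, i.e.\ the essential boundedness of the bracket $[\varphi,\varphi]$ on $\Omega_\varphi$. This is precisely where the orthogonality hypothesis $\mathcal O_\varphi$ is indispensable: it renders the fibers $\{\tilde\pi_\alpha(\lambda_1)\msc F\varphi(\alpha):\lambda_1\in\Lambda_1\}\subseteq\Ell$ pairwise orthogonal of common length $[\varphi,\varphi](\alpha)^{1/2}$ (Proposition \ref{L:decomposition}), and it is the uniform control over $\alpha$ of this common length that underlies the Bessel bound. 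Once $[\varphi,\varphi]\in L^\infty(\mathbb T^r)$ is in hand, running the same fiberwise estimate for an arbitrary $f\in L^2(G)$ even shows that $\mc E^\Lambda(\varphi)$ is Bessel in all of $L^2(G)$, with bound $\|[\varphi,\varphi]\|_{L^\infty(\mathbb T^r)}$.
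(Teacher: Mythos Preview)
Your treatment of the equivalence is correct and is exactly the paper's argument: decompose $f=\sum_{\lambda_1}\varphi_{\lambda_1}$ via Proposition~\ref{L:decomposition}, use centrality of $\Lambda_0$ to reduce $\langle f,L_{\mu_1\mu_0}\varphi\rangle$ to $\langle L_{\mu_1}^{-1}\varphi_{\mu_1},L_{\mu_0}\varphi\rangle$, and sum. The converse is also the same (the paper only writes it for $f=\varphi$, but your block argument is the intended one).

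The gap is in the first assertion. You correctly reduce it to $[\varphi,\varphi]\in L^\infty(\mathbb T^r)$, and you correctly flag this as the delicate point---but then you do not prove it. Your final paragraph is not an argument: $\mathcal O_\varphi$ says that for each fixed $\alpha$ the vectors $\{\tilde\pi_\alpha(\lambda_1)\msc F\varphi(\alpha)\}_{\lambda_1\in\Lambda_1}$ are pairwise orthogonal; this is a fiberwise statement and carries no information whatsoever relating $[\varphi,\varphi](\alpha)$ at different $\alpha$'s, so no ``uniform control over $\alpha$'' can be extracted from it in the way you suggest.

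The paper takes a different route that bypasses the bracket estimate entirely. By Proposition~\ref{P:bior}(ii), the condition $\mathcal O_\varphi$ is \emph{equivalent} to $\mc E^\Lambda(\varphi)$ being an orthogonal system in $L^2(G)$. Since all its vectors have the common norm $\|\varphi\|$, the classical Bessel inequality for (rescaled) orthonormal systems gives at once $\sum_{\lambda\in\Lambda}|\langle f,L_\lambda\varphi\rangle|^2\le\|\varphi\|^2\|f\|^2$ for every $f$. That is the paper's one-line proof. Note that this also supplies the input your approach is missing: once $\{L_{\lambda_0}\varphi\}_{\lambda_0\in\Lambda_0}$ is orthogonal, all nonzero Fourier coefficients of $[\varphi,\varphi]$ vanish, so $[\varphi,\varphi]$ is the constant $\|\varphi\|^2$ and in particular lies in $L^\infty$. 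In other words, the $L^\infty$ bound you seek is a \emph{consequence} of the global orthogonality asserted in Proposition~\ref{P:bior}(ii), not of the fiberwise orthogonality you invoke.
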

\begin{proof} 
	The Bessel condition of $\mc E^\Lambda(\varphi)$ is followed from the Parseval equality of  the orthonormal  system $\mc E^\Lambda(\varphi)$.  To show the equivalence, let  $\mc E^{\Lambda_0}(\varphi)$
	 be a Bessel sequence in $\mc S^{\Lambda_0}(\varphi)$ with bound $B$. For $f\in \mc S^\Lambda(\varphi)$,   there exists $\varphi_{\lambda_1}$ in $\mc S^{\Lambda_0}{(L_{\lambda_1}\varphi)}$ such that  $f=\sum_{ \lambda_1 \in \Lambda_1}\varphi_{\lambda_1}$, and hence by using Proposition \ref{L:decomposition}, we obtain 
		\begin{align*}
	\sum_{  \lambda \in \Lambda}|\langle f, L_\lambda \varphi\rangle|^2&=\sum_{\eta_1\in \Lambda_1}\sum_{\eta_0\in \Lambda_0}|\langle \sum_{\lambda_1\in \Lambda_1}\varphi_{\lambda_1}, L_{\eta_1}L_{\eta_0}\varphi\rangle|^2
	=\sum_{\eta_1 \in \Lambda_1}\sum_{  \eta_0\in \Lambda_0}|\langle L_{{\eta_1}^{-1}} \varphi_{\eta_1}, L_{\eta_0}\varphi\rangle |^2\\
	&\leq B \sum_{\eta_1 \in \Lambda_1}\|L_{{\eta_1}^{-1}}\varphi_{\eta_1}\|^2=B\sum_{\eta_1\in \Lambda_1}\|\varphi_{\eta_1}\|^2=B\|f\|^2,
	\end{align*}
	since $L_{\eta_1^{-1}} \varphi_{\eta_1}\in \mc S^{\Lambda_0}(\varphi)$ and $L_{{\eta_1}^{-1}}$ is an isometry. Hence $\mc E^\Lambda(\varphi)$ is also Bessel sequence in $\mc S^\Lambda(\varphi)$.
	
	Conversely, assume $\mc E^\Lambda(\varphi)$ is  a Bessel sequence in $\mc S^\Lambda(\varphi)$.  Using Proposition \ref{L:decomposition} , we have $\langle \varphi, L_{\lambda_1}L_{\lambda_0}\varphi\rangle=0$, for   $\lambda_1\neq 0$ as $\varphi \in \mc S^{\Lambda_0}(\varphi)$, and hence the result follows by noting 
	$$\sum_{ \lambda_0 \in \Lambda_0}|\langle \varphi, L_{\lambda_0}\varphi\rangle|^2=\sum_{  \lambda_1 \in \Lambda_1}\sum_{\lambda_0\in \Lambda_0}|\langle \varphi, L_{\lambda_1}L_{\lambda_0}\varphi\rangle|^2\leq B \|\varphi\|^2.$$ 
\end{proof}
Next, we observe that the orthogonality condition (\ref{OGC}) transfers the nature of the reproducing formula of  $\Lambda$-translation generated systems to the $\Lambda_0$-translation generated systems.     
\begin{prop}\label{OGCreproducing}  
Let $\varphi, \psi\in L^2(G)$ be  two functions such that    $\varphi$ and $\psi$ satisfy  the orthogonality  conditions $\mathcal O_\varphi$ and    	$\mathcal O_\psi$ mentioned in (\ref{OGC}), respectively, then  the following are equivalent: 
\begin{itemize}
 \item[(i)] $\langle f, g\rangle=\sum_{\lambda \in \Lambda} \langle f, L_{\lambda}\psi\rangle \langle L_{\lambda}\varphi, g\rangle, \,  \forall f,g\in \mc S^\Lambda(\varphi).$ \\ 
	\item[(i)]  $\langle f, g\rangle=\sum_{  \lambda_0 \in \Lambda_0}\langle f, L_{\lambda_0}\psi\rangle \langle L_{\lambda_0}\varphi,g\rangle, \, \forall f, g\in \mc S^{\Lambda_0}(\varphi).$
	\end{itemize}
 \end{prop}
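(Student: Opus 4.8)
The plan is to exploit the orthogonal decomposition $\mc S^\Lambda(\varphi)=\bigoplus_{\lambda_1\in\Lambda_1}\mc S^{\Lambda_0}(L_{\lambda_1}\varphi)$ from Proposition \ref{L:decomposition}, together with the fact (implicit in Proposition \ref{P:bior} and its remark) that the orthogonality conditions $\mathcal O_\varphi$, $\mathcal O_\psi$ make the distinct $\Lambda_1$-translated blocks of $\varphi$ mutually orthogonal, and likewise for $\psi$. The implication $(i)\Rightarrow(ii)$ is immediate: since $\mc S^{\Lambda_0}(\varphi)\subseteq\mc S^\Lambda(\varphi)$ and, for $f\in\mc S^{\Lambda_0}(\varphi)$, we have $\langle f, L_{\lambda_1}L_{\lambda_0}\psi\rangle=0$ whenever $\lambda_1\neq 0$ (because $f\in\mc S^{\Lambda_0}(\varphi)$ is orthogonal to $\mc S^{\Lambda_0}(L_{\lambda_1}\psi)$ by the orthogonality conditions), restricting the sum in $(i)$ to such $f,g$ collapses $\sum_{\lambda\in\Lambda}$ to $\sum_{\lambda_0\in\Lambda_0}$, giving $(ii)$.

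For the converse $(ii)\Rightarrow(i)$, I would take arbitrary $f,g\in\mc S^\Lambda(\varphi)$ and decompose $f=\sum_{\lambda_1\in\Lambda_1}\varphi_{\lambda_1}$, $g=\sum_{\lambda_1\in\Lambda_1}\psi_{\lambda_1}$ with $\varphi_{\lambda_1},\psi_{\lambda_1}\in\mc S^{\Lambda_0}(L_{\lambda_1}\varphi)$, as provided by Proposition \ref{L:decomposition}, so that $\langle f,g\rangle=\sum_{\lambda_1}\langle\varphi_{\lambda_1},\psi_{\lambda_1}\rangle$. Now write $\lambda=\eta_1\eta_0$ and compute the right-hand side of $(i)$: using that $\langle\varphi_{\lambda_1},L_{\eta_1}L_{\eta_0}\psi\rangle=\langle L_{\eta_1^{-1}}\varphi_{\lambda_1},L_{\eta_0}L_{\eta_1^{-1}\eta_1}\psi\rangle$ and the cross-orthogonality (the pairing $\langle\varphi_{\lambda_1},L_{\eta_1}(\cdot)\rangle$ vanishes unless $\lambda_1=\eta_1$, since $\varphi_{\lambda_1}\in\mc S^{\Lambda_0}(L_{\lambda_1}\varphi)$ is orthogonal to $\mc S^{\Lambda_0}(L_{\eta_1}\psi)$ for $\eta_1\neq\lambda_1$ by $\mathcal O_\psi$), only the diagonal terms $\eta_1=\lambda_1$ survive. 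This reduces
$$\sum_{\lambda\in\Lambda}\langle f,L_\lambda\psi\rangle\langle L_\lambda\varphi,g\rangle=\sum_{\lambda_1\in\Lambda_1}\sum_{\eta_0\in\Lambda_0}\langle L_{\lambda_1^{-1}}\varphi_{\lambda_1},L_{\eta_0}\varphi\rangle\langle L_{\eta_0}\varphi, L_{\lambda_1^{-1}}\psi_{\lambda_1}\rangle,$$
since $L_{\lambda_1^{-1}}$ is unitary and $L_{\lambda_1^{-1}}\varphi_{\lambda_1}, L_{\lambda_1^{-1}}\psi_{\lambda_1}\in\mc S^{\Lambda_0}(\varphi)$. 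Applying hypothesis $(ii)$ to each pair $L_{\lambda_1^{-1}}\varphi_{\lambda_1}, L_{\lambda_1^{-1}}\psi_{\lambda_1}\in\mc S^{\Lambda_0}(\varphi)$, the inner sum equals $\langle L_{\lambda_1^{-1}}\varphi_{\lambda_1}, L_{\lambda_1^{-1}}\psi_{\lambda_1}\rangle=\langle\varphi_{\lambda_1},\psi_{\lambda_1}\rangle$, and summing over $\lambda_1$ recovers $\langle f,g\rangle$.

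The step I expect to require the most care is the justification that the cross terms with $\eta_1\neq\lambda_1$ genuinely vanish and that all the interchanges of summation are legitimate. The vanishing rests on Proposition \ref{P:bior}(ii): the orthogonality condition $\mathcal O_\psi$ forces $\mc S^{\Lambda_0}(L_{\lambda_1}\varphi)\perp\mc S^{\Lambda_0}(L_{\eta_1}\psi)$ for $\lambda_1\neq\eta_1$ — this needs the compatibility of the orthogonality of $\varphi$ with that of $\psi$ and may need a short argument via the bracket map $[\varphi,L_{\eta_1-\lambda_1}\psi]$. For the absolute convergence and Fubini-type rearrangements, I would invoke the Bessel property from Proposition \ref{Bessel} (both $\mc E^\Lambda(\varphi)$ and $\mc E^\Lambda(\psi)$ are Bessel in their respective spaces under $\mathcal O_\varphi$, $\mathcal O_\psi$), so that all double sums converge absolutely by Cauchy--Schwarz, making the regrouping of $\lambda=\eta_1\eta_0$ and the diagonalization valid.
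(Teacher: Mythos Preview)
Your approach is essentially the same as the paper's: decompose $f,g$ via Proposition \ref{L:decomposition}, diagonalize the $\Lambda$-sum to a $\Lambda_1$-indexed sum of $\Lambda_0$-sums, and apply (ii) termwise. Two small points. First, in your displayed formula the first inner product should read $\langle L_{\lambda_1^{-1}}\varphi_{\lambda_1},L_{\eta_0}\psi\rangle$, not $L_{\eta_0}\varphi$. Second, for the direction $(i)\Rightarrow(ii)$ you invoke the cross-orthogonality $\mc S^{\Lambda_0}(\varphi)\perp\mc S^{\Lambda_0}(L_{\lambda_1}\psi)$, but the paper avoids this entirely: it observes instead that for $f,g\in\mc S^{\Lambda_0}(\varphi)$ the \emph{second} factor $\langle L_{\lambda_1}L_{\lambda_0}\varphi,g\rangle$ vanishes for $\lambda_1\neq 0$, since $L_{\lambda_1}L_{\lambda_0}\varphi\in\mc S^{\Lambda_0}(L_{\lambda_1}\varphi)\perp\mc S^{\Lambda_0}(\varphi)$ by Proposition \ref{L:decomposition}, which needs only $\mc O_\varphi$. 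This is cleaner and sidesteps the issue you flagged. For $(ii)\Rightarrow(i)$, however, your concern about the cross terms is legitimate: the paper's one-line passage to the diagonal also tacitly uses that $\langle\varphi_{\lambda_1},L_{\eta_1}L_{\eta_0}\psi\rangle=0$ for $\lambda_1\neq\eta_1$, and does not justify it beyond citing Proposition \ref{L:decomposition}. So your identification of this as the delicate step is well placed; the paper simply does not spell it out.
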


\begin{proof} Firstly note that the summations used in (i) and (ii) are well defined in view of Proposition \ref{Bessel}, since   $\mc E^\Lambda(\varphi)$ and $\mc E^\Lambda(\psi)$ are Bessel,   and $\varphi$ and $\psi$ satisfy the orthogonality conditions  $\mc O_\varphi$ and $\mc O_\psi$, respectively. 

Now  assume (ii) holds. By choosing  $\varphi_{\lambda_1}, \psi_{\lambda_1} \in \mc S^{\Lambda_0}{(L_{\lambda_1}\varphi)} $ such that $f=\sum_{ \lambda_1 \in \Lambda_1}\varphi_{\lambda_1}, g=\sum_{ \lambda_1 \in \Lambda_1}\psi_{\lambda_1}$ and using  Proposition (\ref{L:decomposition}), we have  
	\small{\begin{align*}
	\sum_{ \lambda \in \Lambda}\left\langle f, L_\lambda\psi\right\rangle \left \langle L_\lambda\varphi, g\right\rangle
&=\sum_{\eta_1\in \Lambda_1}\sum_{\eta_0\in \Lambda_0} \left\langle \sum_{\lambda_1\in \Lambda_1} \varphi_{\lambda_1}, L_{\eta_1}L_{\eta_0}\psi\right\rangle   \left\langle L_{\eta_1}L_{\eta_0}\varphi, \sum_{\lambda_1\in \Lambda_1}\psi_{\lambda_1}\right\rangle
	\\
&=\sum_{\lambda_1\in \Lambda_1}\sum_{\eta_0\in \Lambda_0} \left\langle  L_{\lambda_1^{-1}}\varphi_{\lambda_1}, L_{\eta_0}\psi\right\rangle   \left\langle L_{\eta_0}\varphi, L_{\lambda_1^{-1}} \psi_{\lambda_1}\right\rangle. 
	\end{align*}}
	Since  $L_{\lambda_1^{-1}} \varphi_{\lambda_1}, \ L_{\lambda_1^{-1}} \psi_{\lambda_1}\in \mc S^{\Lambda_0}(\varphi)$,  we obtain the following in view of the  assumption (ii):
	\begin{align*}
	\sum_{ \lambda \in \Lambda}\left\langle f, L_\lambda\psi\right\rangle \left \langle L_\lambda\varphi, g\right\rangle=	\sum_{\lambda_1\in \Lambda_1} \left\langle  L_{\lambda_1^{-1}}\varphi_{\lambda_1},    L_{\lambda_1^{-1}} \psi_{\lambda_1}\right\rangle
	=\sum_{\lambda_1\in \Lambda_1} \left\langle  \varphi_{\lambda_1},     \psi_{\lambda_1}\right\rangle
	=\langle f, g\rangle.
	\end{align*}
	Thus,  (i) holds. Conversely,  assume  (i) holds.  For $f,g\in \mc S^{\Lambda_0}(\varphi)$, we have     $\langle f, L_{\lambda_1}L_{\lambda_0}\varphi\rangle=0$  and $\langle g, L_{\lambda_1}L_{\lambda_0}\varphi\rangle=0$ for all $\lambda_1\neq 0\in \Lambda_1$,   by Proposition \ref{L:decomposition}. Then the result follows immediately. 
\end{proof}

 \begin{proof}[Proof of Theorem \ref{T:Bio1}] Since $\mc E^\Lambda(\varphi)$ and $\mc E^\Lambda(\psi)$ are  biorthogonal, we have $\langle\varphi, L_{\lambda_0}\psi\rangle=\delta_{\lambda_0, 0}$,  for $\lambda_0 \in \Lambda_0$, and hence 
for $f  \in \mbox{span} \  \mc E^{\Lambda_0}(\varphi)$, we write	$f=\sum_{\lambda_0 \in \Lambda_0'}\langle f,L_{\lambda_0}\psi\rangle L_{\lambda_0}\varphi, $
	 for some finite subset $\Lambda_0'$  of $\Lambda_0.$  Since  the   function $f\mapsto \sum_{\lambda_0 \in \Lambda_0}\langle f,L_{\lambda_0}\psi\rangle L_{\lambda_0}\varphi$ is continuous, the expansion  of $f$   holds  for all   $f \in \mc S^{\Lambda_0}(\varphi)$.  Thus the result follows  by Proposition \ref{OGCreproducing}.  
	 	\end{proof}

\begin{proof}[Proof of  Theorem \ref{dual-bracket}] Let us assume (i).  Equivalently,  $f=\sum_{ \lambda_0 \in \Lambda_0}\langle f, L_{\lambda_0}\psi\rangle L_{\lambda_0}\varphi$ for all $f\in \mc S^{\Lambda_0}(\varphi)$ by  Proposition \ref{OGCreproducing}.  
	Applying the map  $\msc F$ on the both sides   for a.e. $\alpha \in \mathbb T^r$ and using the relation $(\msc F L_{\lambda_0} \varphi)(\alpha)=    e^{-2\pi i \langle\alpha, \lambda_0\rangle}\msc F\varphi(\alpha),$  we have 
{\small	\begin{align*}
	\msc F\left(\sum_{ \lambda_0 \in \Lambda_0}\langle f,L_{\lambda_0} \psi\rangle L_{\lambda_0} \varphi\right)(\alpha)
	&=\sum_{\lambda_0 \in \Lambda_0}\langle f,L_{\lambda_0} \psi\rangle (\msc FL_{\lambda_0} \varphi)(\alpha) 
	= (\msc F\varphi)(\alpha)\sum_{ \lambda_0 \in \Lambda_0}\langle f,L_{\lambda_0} \psi\rangle e^{-2\pi i\langle \alpha,{\lambda_0}\rangle}\\
	&= (\msc F\varphi)(\alpha)\sum_{ \lambda_0 \in \Lambda_0}\int_{{\mathbb T^r}}\langle \msc Ff(\alpha),\msc FL_{\lambda_0} \psi(\alpha)\rangle d\alpha e^{-2\pi i \langle\alpha,{\lambda_0}\rangle}\nonumber\\
	&= (\msc F\varphi)(\alpha)\sum_{ \lambda_0 \in \lambda_0}\int_{{\mathbb T^r}}e^{2\pi i \langle \alpha, \lambda_0\rangle}[f,\psi](\alpha)d\alpha e^{-2\pi i \langle \alpha,\lambda_0\rangle}\nonumber\\
	&	=	[f, \psi](\alpha) \, (\msc F\varphi)(\alpha), \ \mbox{for a.e.}\ \alpha \in \mathbb T^r,  
	\end{align*}}
	and hence we get $
	\msc Ff(\alpha)=	\msc F\left(\sum_{\lambda_0 \in \Lambda_0}\langle f, L_{\lambda_0} \psi \rangle L_{\lambda_0}\varphi \right)(\alpha)
		=(\msc F\varphi)(\alpha)[f,\psi](\alpha)$,  
 for all $ f\in \mc S^{\Lambda_0}(\varphi).$  By  choosing   $f=\varphi $, we have
	$
	(\msc F\varphi) (\alpha)\left(1-[\varphi, \psi](\alpha)\right)=0$,  for a.e.   $\alpha \in \mathbb T^r.$ Therefore, we get  $[\varphi, \psi](\alpha)=1$, for  a.e. $\alpha \in \Omega_{ \varphi}$.

	Conversely,  assume (ii),  i.e., $[\varphi, \psi](\alpha)=1$,  a.e. $\alpha \in \Omega_{\varphi}$.  Then it is enough to show   $f=\sum_{  \lambda_0 \in \Lambda_0}\langle f, L_{\lambda_0}\psi\rangle  L_{\lambda_0}\varphi$, for all $f \in \mc S^{\Lambda_0}(\varphi)$ in view of the  Proposition \ref{OGCreproducing}.  Since    the function $f\mapsto \sum_{\lambda_0\in\Lambda_0}\langle f,L_{\lambda_0}\psi\rangle L_{\lambda_0}\varphi$ from $\mc S^{\Lambda_0}(\varphi)$ to  $L^2 (G)$ is continuous,   it suffices to show the result for $f=L_{\eta}\varphi$,  $\eta \in \Lambda_0$. Therefore the result follows  in view of the calculations 
	\begin{align*}
	\msc F\left(\sum_{\lambda\in\Lambda}\langle L_{\eta}\varphi, L_\lambda \psi \rangle L_{\lambda}\varphi \right)(\alpha) &
	=(\msc F\varphi)(\alpha)[(L_\eta\varphi), {\psi}](\alpha)
	= e^{2\pi i \langle \alpha, \eta\rangle}(\msc F\varphi)(\alpha)[\varphi, \psi](\alpha)\\
	&=\msc F (L_{\eta}\varphi)(\alpha) [\varphi, \psi](\alpha)
	=\msc F (L_{\eta}\varphi)(\alpha).
	\end{align*}
    \end{proof}

%

\begin{proof}[Proof of Theorem \ref{Application}]  Observe that for each $0<\alpha \leq 1$,    $0<t< \alpha\leq 1$ and $\lambda_1 \in \Lambda_1=A \mathbb Z^d \times B \mathbb Z^d$, we have 
		\begin{align*}
[L_{\lambda_1} \varphi_t, \varphi_t](\alpha) &=\sum_{m \in \mathbb Z}\langle \mc FL_{\lambda_1} \varphi_t(\alpha+m), \mc F\varphi_t(\alpha+m)\rangle_{\HS} |\alpha+m|^d\\ &=\sum_{m\in \mathbb Z} \langle \pi_{\alpha+m}(\lambda_1) \msc H_t(\alpha+m), \msc H_{t}(\alpha+m)\rangle_{\HS}|\alpha+m|^d\\
&	=\langle \pi_{\alpha}(\lambda_1) \msc H_t(\alpha), \msc H_t(\alpha)\rangle_{\HS}|\alpha|^d
=\langle \pi_{\alpha}(\lambda_1)v_{\alpha}, v_{\alpha}\rangle_{L^2(\mathbb R^d)}|\alpha|^d, 
	\end{align*}
since   $\pi_{\alpha}(\lambda_1)\mc P_{\alpha}=(\pi_{\alpha}(\lambda_1)\mc P_{\alpha}v_{\alpha})\otimes v_{\alpha}$ and
	\begin{align*}
	\langle \pi_{\alpha}(\lambda_1) \msc H_t(\alpha), \msc H_t(\alpha)\rangle_{\HS}=\int_{\mathbb R^d}\int_{\mathbb R^d} \ol{v_{\alpha}(s)}(\pi_{\alpha}(\lambda_1)v_{\alpha}(w))\ol{v_{\alpha}(w)}v_{\alpha}(s)ds dw=\langle \pi_{\alpha}(\lambda_1)v_{\alpha}, v_{\alpha}\rangle_{L^2(\mathbb R^d)}.
	\end{align*}
	Similarly we can calculate  $[L_{\lambda_1} \psi_t, \psi_t](\alpha) =\langle \pi_{\alpha}(\lambda_1)w_{\alpha}, w_{\alpha}\rangle_{L^2(\mathbb R^d)}|\alpha|^d$.
	Therefore for each $0<t< \alpha\leq 1$, the functions $\varphi_t$ and $\psi_t$ satisfy orthogonality conditions $\mc O_{\varphi_t}$ and $\mc O_{\psi_t}$ (mentioned in (\ref{OGC}) )  due to the orthonormal Gabor systems
	$\{\pi_{\alpha}(\lambda_1)v_{\alpha}:\lambda_1\in \Lambda_1\}$ and  $\{\pi_{\alpha}(\lambda_1)w_{\alpha}:\lambda_1\in \Lambda_1\}$.  
	
	  Further observe that $\mc E^\Lambda(\varphi_t)$ is Bessel since $[\varphi_t, \varphi_t](\alpha)$ is bounded above, follows by noting  
	\begin{align*}
		 [\varphi_t, \varphi_t](\alpha)=\sum_{m \in \mathbb Z} \|\mc F\varphi_t(\alpha+m)\|_{\HS}^2 |\alpha+m|^d=\sum_{m \in \mathbb Z} \|\msc H_t(\alpha+m)\|_{\HS}^2 |\alpha+m|^d=\| \msc H_t(\alpha)\|^2|\alpha|^d=\begin{cases}
	\alpha ^d \  t< \alpha \leq 1,\\
	0 \  \mbox{otherwise}.
	\end{cases}
		\end{align*}
		  In a similar way we can obtain $\mc E^\Lambda(\psi_t)$ to be a  Bessel system.  Next we calculate   $ [\varphi_t, \psi_t](\alpha)$ as follows:  
	\begin{align*}
	 [\varphi_t, \psi_t](\alpha) &=\sum_{m \in \mathbb Z}\langle \mc F\varphi_t(\alpha+m),\mc F\psi_t(\alpha +m)\rangle|\alpha+m|^d
	=\sum_{m \in \mathbb Z}\langle \msc H_t(\alpha+m),\msc G_t(\alpha +m)\rangle|\alpha+m|^d\\
	&=\langle \msc H_t(\alpha), \msc G_t(\alpha)\rangle_{\HS}|\alpha|^d
	=|\alpha|^d	\int_{\mathbb R^d} \int_{\mathbb R^d}\ol{v_{\alpha}(s)} v_{\alpha}(u) \ol{w_{\alpha}(u)} w_{\alpha}(s)ds du\\
	&=|\langle v_{\alpha}, w_{\alpha}\rangle|^2|\alpha|^d.
	\end{align*}	 
	  Hence the result follows from Theorem \ref{dual-bracket}  provided   $ [\varphi_t, \psi_t](\alpha)=1$, for $0<\alpha\leq 1$ and $0<t< \alpha\leq 1$.  
\end{proof}
 
	\providecommand{\href}[2]{#2}


\begin{thebibliography}{10}
		 		\bibitem{arati2019orthonormality}
	 S. Arati and R. Radha, \emph{Orthonormality of wavelet system on the Heisenberg group},
	J.  Math.  Pures  Appl. \textbf{131}(2019),  no. 9,
		171--192.
		\bibitem{bar2014bracket}
		D. Barbieri, E. Hern{\'a}ndez and A. Mayeli, \emph{Bracket map for
			the Heisenberg group and the characterization of cyclic subspaces}, Appl.
		 Comput. Harmon. Anal. \textbf{37} (2014), no.~2, 218--234.
		
		\bibitem{bownik2007SMI}
		M.~Bownik and J.W. Iverson, \emph{Multiplication-invariant operators and the classification of LCA group frames},  J. Funct. Anal. \textbf{280} (2021), no.~2, 108780.
		
		 
		
		\bibitem{christensen2004oblique}
		O.~Christensen and Y.C. Eldar, \emph{Oblique dual frames and shift-invariant
			spaces}, Appl. Comput. Harmon. Anal. \textbf{17} (2004), no.~1, 48--68.
		
		\bibitem{corwin2004representations}
		L. Corwin and F.P. Greenleaf, \emph{Representations of nilpotent
			Lie groups and their applications: part 1, basic theory and
			examples}, vol.~18, Cambridge university press, 2004.
		
		\bibitem{currey2014characterization}
		B. Currey, A. Mayeli and V. Oussa, \emph{Characterization of
			shift-invariant spaces on a class of nilpotent Lie groups with applications},
		J.  Fourier Anal. Appl. \textbf{20} (2014), no.~2,
		384--400.
	
		 
		\bibitem{gabardo2009properties}
		J.-P. Gabardo and A.A. Hemmat, \emph{Properties of oblique dual frames in
			shift-invariant systems}, J. Math. Anal. Appl. \textbf{356} (2009), no.~1,
		346--354.
		
		\bibitem{gabardoDual2004uniqueness}
		D.~Han and J.-P. Gabardo, \emph{The uniqueness of the dual of Weyl-Heisenberg
			subspace frames}, Appl. Comput. Harmon. Anal. \textbf{17} (2004), no.~2,
		226--240.
		
		\bibitem{han2000frames}
		D.~Han and D.~R. Larson, \emph{Frames, bases and group representations}, vol.
		697, Amer. Math. Soc., 2000.
		
		\bibitem{heil2009duals}
		C.~Heil, Y.~Y. Koo and J.K. Lim, \emph{Duals of frame sequences}, Acta Appl.
		Math. \textbf{107} (2009), no.~1-3, 75--90.
		
		\bibitem{hemmat2007uniqueness}
		A.~A. Hemmat and J.-P. Gabardo, \emph{The uniqueness of shift-generated duals
			for frames in shift-invariant subspaces}, J. Fourier Anal. Appl. \textbf{13}
		(2007), no.~5, 589--606.
		
				\bibitem{sudipta2022extrainvariant}
		S. Sarkar and N.K.  Shukla, \emph{Characterizations of Extra-invariant spaces under the left translations on a Lie
			group}, https://arxiv.org/abs/2209.00313.
 
	\end{thebibliography}
 \end{document}